\newcommand{\Appendix}[1]{the full version for}
\renewcommand{\L}{\mathbf{L}}
\renewcommand{\L}{\textsf{L}}
\begin{document}

\title{Unsupervised Feature Selection via Nonnegative Orthogonal Constrained Regularized Minimization}

\author{\name Yan Li \email li-yan20@mails.tsinghua.edu.cn \\
\addr Department of Mathematical Sciences\\
Tsinghua University\\
Beijing 100084, China
\AND
\name Defeng Sun \email defeng.sun@polyu.edu.hk\\
\addr Department of Applied Mathematics\\
The Hong Kong Polytechnic University\\
Hung Hom, Kowloon, Hong Kong
\AND
\name  Liping Zhang\protect\footnotemark[1] \email lipingzhang@tsinghua.edu.cn\\
\addr Department of Mathematical Sciences\\
Tsinghua University\\
 Beijing 100084, China}

\renewcommand{\thefootnote}{\fnsymbol{footnote}}
\footnotetext[1]{Liping Zhang is the corresponding author (lipingzhang@tsinghua.edu.cn).}

\editor{}
\maketitle

\begin{abstract}%
Unsupervised feature selection has drawn wide attention in the era of big data since it is a primary technique for dimensionality reduction. However, many existing unsupervised feature selection models and solution methods were presented for the purpose of application, and lack of theoretical support, e.g., without convergence analysis. In this paper, we first establish a novel unsupervised feature selection model based on regularized minimization with nonnegative orthogonal constraints, which has advantages of embedding feature selection into the nonnegative spectral clustering and preventing overfitting. An effective inexact augmented Lagrangian multiplier method is proposed to solve our model, which adopts the proximal alternating minimization method to solve subproblem at each iteration. We show that the sequence generated by our method globally converges to a Karush-Kuhn-Tucker point of our model. Extensive numerical experiments on popular datasets demonstrate the stability and robustness of our method. Moreover, comparison results of algorithm performance show that our method outperforms some existing state-of-the-art methods. 
\end{abstract}

\vspace{1mm}
\begin{keywords}
unsupervised feature selection, orthogonal constraint, augmented Lagrangian multiplier method, alternating minimization method, Karush-Kuhn-Tucker point
\end{keywords}
\section{Introduction}\label{sec:intro}
 Due to large amounts of data produced by rapid development of technology, processing high-dimensional data is one of the most challenging problems in many fields, such as action recognition~\citep{klaser2011action}, image classification~\citep{gui2014group}, computational biology~\citep{chen2020feature}. Generally, not all the features are equally important for the data with high-dimensional features. There are some redundant, irrelevant and noisy features, which not only increase computational cost and storage burden, but also reduce the performance of learning tasks. Dimensionality reduction methods can be roughly divided into two types: feature extraction~\citep{lee1999learning, charte2021reducing, lian2018hierarchical} and feature selection~\citep{kittler1986feature,9580680,roffo2020infinite,yu2019multi}. They project the high-dimensional feature space to a low-dimensional space to squeeze features. The low-dimensional space generated by the former is usually composed of linear or nonlinear combinations of original features, but irrelevant, redundant and even noisy features are involved in the process of reducing dimension, which may affect the subsequent learning tasks to some extent. However, the latter evaluates each dimension feature of high dimensional data and directly select the optimal feature subset from the original high-dimensional feature set by using certain criteria to achieve compact and accurate data representation~\citep{liu2004selective, molina2002feature}. Compared with the former, the latter has better interpretability. Feature selection maintains the semantic information of the original features and it aims to select valuable and discriminative feature subsets from the original high-dimensional feature set, while feature extraction changes the original meanings of the feature and the new features usually lose the physical meanings of the original features. Therefore, feature selection enjoys tremendous popularity in a wide range of applications from data mining to machine learning. Many feature selection methods~\citep{nie2016unsupervised,hou2013joint,nie2019structured} are proposed to better explore the properties of high-dimensional data.

According to whether the class label information is available or not, feature selection methods can be roughly grouped into two categories, i.e., supervised feature selection, and unsupervised feature selection~\citep{dash1997dimensionality,he2005laplacian}. Benefiting from the sample-wise annotations, supervised feature selection algorithms, e.g., Fisher score~\citep{duda2001stork}, robust regression~\citep{nie2010efficient}, minimum redundancy maximum relevance ~\citep{peng2005feature} and trace radio~\citep{nie2008trace}, are able to select discriminative features and achieve superior classification accuracy and reliability.
With the fact that the labeled data is often inadequate or completely unobtainable in many practical applications, traditional supervised feature selection methods cannot deal with such problems. In addition, annotating the unlabeled data requires an excessive cost in human resources and is time-consuming. Therefore, for the high-dimensional data with missing labels, it is an  effective means to solve above mentioned problems by using unsupervised approaches to reduce the feature dimension. Compared to supervised feature selection, unsupervised feature selection is a more challenging task since the label information of the training data is unavailable~\citep{he2005laplacian}. Many studies have been conducted on unsupervised feature selection methods, such as spectral analysis ~\citep{zhao2007spectral,cai2010unsupervised,li2012unsupervised}, matrix factorization  ~\citep{wang2015embedded,qian2013robust}, dictionary learning~\citep{zhu2016coupled} and so on.

Unsupervised feature selection methods~\citep{nie2019structured,chen2022fast} generally select features according to the intrinsic structural characteristics of data and have achieved pretty good performance, which can alleviate the undesirable influence of noise and redundant features in the original data. For example, MaxVar~\citep{krzanowski1987selection} is a statistical method, which selects features corresponding to the maximum variance.  Laplacian Score~\citep{he2005laplacian} is a similarity preserving method, which evaluates the importance of a feature by its power of locality preservation;  SPEC~\citep{zhao2007spectral} selects features using spectral regression. RSR~\citep{zhu2015unsupervised} is a data reconstruction method, which uses the $l_{2,1}$-norm to measure the fitting error and to promote sparsity; CPFS~\citep{masaeli2010convex} relaxes the feature selection problem into a continuous convex optimization problem; REFS~\citep{li2017reconstruction}  embeds the reconstruction function learning process to feature selection. MCFS~\citep{cai2010unsupervised}  selects features based on spectral analysis and sparse regression problem, UDFS~\citep{yang2011l2} which selects features by preserving the structure based on discriminative information; UDPFS~\citep{wang2020unsupervised} introduces fuzziness into sub-space learning to learn a discriminative projection for feature selection; NDFS~\citep{li2012unsupervised} selects features by leveraging a joint framework of nonnegative spectral analysis and $l_{2,1}$-norm regularization.
However, numerical algorithms proposed in  these unsupervised feature selection methods are often without global convergence analysis and then lack of theoretical support~\citep{shi2016primer}.
Furthermore, these methods may be greatly affected by disturbance and do not have good performance, and then they  may not have good stability and strong robustness.

 Motivated by this, we establish a novel unsupervised feature selection model based on regularized minimization with nonnegative orthogonal constraints, which has two advantages of embedding feature selection into the nonnegative spectral clustering and preventing overfitting. In our model, the $l_{2,1}$-regularized term will enable the subproblem from our proposed algorithm has closed-form optimal solution, and the Frobenius-norm regularization will explicitly control the overfitting, which is the main difference from NDFS~\citep{li2012unsupervised}. And the nonnegative orthogonal constraints can embed feature selection into the nonnegative spectral clustering. However, it is hard to handle the orthogonal constraints in general~\citep{absil2009optimization}. Some existing popular solution methods, such as the multiplicative update method~\citep{ding2006orthogonal,yoo2008orthogonal} and the greedy orthogonal pivoting algorithm~\citep{zhang2019greedy}, require the objective function to be differentiable and have the special formulation. So, they are not applicable to our model. This prompts us to design an effective solution method for our model. Based on the algorithmic frameworks of the augmented Lagrangian method~\citep{andreani2008augmented} and the proximal alternating minimization~\citep{attouch2013convergence}, we propose an effective inexact augmented Lagrangian multiplier (ALM) method to solve our model, which uses the proximal alternating minimization (PAM) method to solve subproblems at each iteration. We show that the sequence generated by our ALM method globally converges to a Karush-Kuhn-Tucker point of our model. Numerical experiments on popular datasets demonstrate the stability and robustness of our method. Moreover, comparison results of algorithm performance show that our method outperforms some existing state-of-the-art methods.

The main contribution of this paper is summarized as follows:
\begin{itemize}
    \item  We establish a novel $l_{2,1}$-regularized optimization model with nonnegative orthogonal constraints for unsupervised feature selection, which has two advantages of embedding feature selection into the nonnegative spectral clustering and preventing overfitting. Specifically, we use the  spectral clustering technique to learn pseudo class labels, and then select features which are most discriminative to pseudo class labels.

    \item We propose an effective  inexact ALM method to solve our model. At each iteration, we use the PAM method to solve subproblems, which has the advantage of making each subproblem have a closed form solution. This helps us to show that the sequence generated by our ALM method globally converges to a Karush-Kuhn-Tucker point of our model without any further assumption.
 Numerical results on popular datasets are reported to show the efficiency, stability and robustness of our method.
\end{itemize}

The rest of this paper is organized as follows. In Section 2, some preliminaries for nonsmooth  optimization are collected. In Section 3, we establish a novel model for unsupervised feature selection. In Section 4, an inexact ALM method is proposed to solve our model, and its convergence analysis is also given.  Numerical experiments and concluding remarks are given in the last two sections.

\section{Preliminaries}
In this section, we recall some preliminaries on nonsmooth optimization and give some notations. Throughout this paper, matrices are written as capital letters (e.g., $A,B,\cdots$) and vectors are denoted as boldface lowercase letters (e.g., ${\bf x},{\bf y},\cdots$). For any positive integer $n$, denote $[n]=\{1,2,\ldots,n\}$. Given a matrix $Y=(Y_{i,j})\in\mathbb{R}^{n\times m}$,  its maximum (elementwise) norm is denoted by $$\Vert Y\Vert_{\infty}:=\max\{\vert Y_{i,j}\vert:\ i\in[n],\,j\in[m]\}.$$
The Frobenius norm of $Y$ is denoted by $$\Vert Y\Vert_F:=\sqrt{\sum_{i=1}^{n}\sum_{j=1}^{m}Y_{i,j}^2},$$
and its $l_{2,1}$-norm is defined as $$\Vert Y\Vert_{2,1}:=\sum_{i=1}^{n}\sqrt{\sum_{j=1}^{m}Y_{i,j}^2}=\sum_{i=1}^{n}\Vert Y_i\Vert_2,$$
where $Y_i$ is the $i$-th row of $Y$ and $\Vert\cdot\Vert_2$ is Euclidean norm.
Let $\text{Vec}(Y)$ be an $mn\times 1$ vector with $\text{Vec}(Y):=[Y_1^\intercal, Y_2^\intercal,\cdots, Y_m^\intercal]^\intercal$. For any $\textbf{v}\in\mathbb{R}^n$, let $[\textbf{v}]_i$ denote its $i$th component, and let $\text{diag}(\textbf{v})\in\mathbb{R}^{n\times n}$ denote the diagonal matrix with diagonal entries $\{[\textbf{v}]_i\}_{i=1}^n$. Given a square matrix $Y$, $Y\succ 0$ denotes that $Y$ is a positive definite matrix and the trace of $Y$, i.e., the sum of the diagonal elements of $Y$, is denoted by $\text{Tr}(Y)$. $\textbf{E}$ is a matrix whose elements are all $1$. $\textbf{O}$ is a matrix whose elements are all $0$. $\textbf{O}\leq X\leq \textbf{E}$ denotes that each element of X satisfies $0\leq X_{i,j}\leq 1$. $\textbf{0}\leq \textbf{v} \leq\textbf{1}$ denotes that each element of $\textbf{v}$ satisfies $0\leq [\textbf{v}]_i\leq 1$. Given a set $\Omega$, $\Pi_\Omega Y$ denotes the projection of $Y$ on $\Omega$. For an index sequence $\mathcal{K}=\{k_0, k_1, k_2,\ldots\}$ that satisfies $k_{j+1}> k_j$ for any $j\geq 0$, we denote $\lim_{k\in\mathcal{K}}x_k:=\lim_{j\rightarrow\infty}x_{k_j}$. For any set $S$, its indicator function is defined by
\begin{equation}
    \delta_S(X)=
    \left\{
    \begin{array}{cc}
         0,& \text{if}\;X\in S, \\
         +\infty,&\text{otherwise}.
    \end{array}
    \right.
\end{equation}

Let us recall some definitions of sub-differential calculus~\citep[see, e.g.,][]{rockafellar2009variational}.
\begin{definition}
Let $C\subseteq\mathbb{R}^n$ and $\overline{\textbf{x}}\in C$. A vector $\textbf{v}$ is normal to $C$ at $\overline{\textbf{x}}$ in the regular sense, or a regular normal, written $\textbf{v}\in\hat{N}_{C}(\overline{\textbf{x}})$, if
$$\langle\textbf{v},\textbf{x}-\overline{\textbf{x}}\rangle\leq \textbf{o}(\Vert\textbf{x}-\overline{\textbf{x}}\Vert)\ \text{for}\ \textbf{x}\in C.$$
A vector is normal to $C$ at $\overline{\textbf{x}}$ in the general sense, written $\textbf{v}\in N_{C}(\overline{\textbf{x}})$, if there exists sequence $\{\textbf{x}_k\}_k\subset C, \{\textbf{v}_k\}_k$ such that $\textbf{x}_k\rightarrow\overline{\textbf{x}}$ and $\textbf{v}_k\rightarrow\textbf{v}$ with $\textbf{v}_k\in\hat{N}_{C}(\textbf{x}_k)$. The cone $N_{C}(\overline{\textbf{x}})$ is called the normal cone to $C$ at $\overline{\textbf{x}}$.
\end{definition}
\begin{definition}
Let $f\!:\!\mathbb{R}^n\rightarrow \mathbb{R}\cup\{+\infty\}$ be a proper lower semicontinuous function.
\begin{itemize}
  \item[1)] The domain of $f$ is defined and denoted by $\text{dom}\;f\!:=\!\{\textbf{x}\in\mathbb{R}^n:\!f(\textbf{x})\textless +\infty\}$.
  \item[2)] For each $\textbf{x}\in \text{dom}\;f$, the vector $\textbf{x}^{*}\in\mathbb{R}^n$ is said to be a regular subgradient of $f$ at $\textbf{x}$, written $\textbf{x}^{*}\in\hat{\partial}f(\textbf{x})$, if
  $f(\textbf{y})\geq f(\textbf{x})+\langle\textbf{x}^*,\textbf{y}-\textbf{x}\rangle+\textbf{o}(\Vert\textbf{x}-\textbf{y}\Vert)$.
  \item[3)] The vector $\textbf{x}^{*}\in\mathbb{R}^n$ is said to be a (limiting) subgradient of $f$ at $\textbf{x}\in\text{dom}\;f$, written $\textbf{x}^{*}\in\partial f(\textbf{x})$, if there exists $\{\textbf{x}_n\}_n$, $\{\textbf{x}_n^*\}_n$ such that $\textbf{x}_n\rightarrow\textbf{x}, f(\textbf{x}_n)\rightarrow f(x)\;\text{and}\;\textbf{x}_n^*\in\hat{\partial}f(\textbf{x}_n)\;\text{with}\;\textbf{x}_n^*\rightarrow\textbf{x}^*$.
  \item[4)] For each $\textbf{x}\in \text{dom}\;f$, $\textbf{x}$ is called (limiting)-critical if $\textbf{0}\in\partial f(\textbf{x})$.
\end{itemize}
\begin{remark}[Closedness of $\partial f$]\label{rma1}
Let $(\textbf{x}_k,\textbf{x}_k^*)\in \text{Graph}\;\partial f$ be a sequence that converges to $(\textbf{x},\textbf{x}^*)$. By the definition of $\partial f(\textbf{x})$, if $f(\textbf{x}_k)$ converges to $f(\textbf{x})$ then $(\textbf{x},\textbf{x}^*)\in \text{Graph}\;\partial f$.
\end{remark}
\begin{remark}\citep[Example~6.7]{rockafellar2009variational}\label{rma2}
   Let $S$ be a closed nonempty subset of $\mathbb{R}^n$, then $$\partial\delta_{S}(\overline{\textbf{x}})=N_S(\overline{\textbf{x}}),\ \overline{\textbf{x}}\in S.$$
   Furthermore, for a smooth mapping $G:\mathbb{R}^n\rightarrow\mathbb{R}^m$, i.e., $G(\textbf{x}):=(g_1(\textbf{x}),\cdots, g_m(\textbf{x}))^\intercal$, define
   $S=G^{-1}(\textbf{0})\subset\mathbb{R}^n$. Set $\nabla G(\textbf{x}):=[\frac{\partial g_j}{\partial\textbf{x}_i}(\textbf{x})]_{i,j=1}^{n,m}\in\mathbb{R}^{n\times m}$.
   If $\nabla G(\overline{\textbf{x}})$ has full rank $m$ at a point $\overline{\textbf{x}}\in S$, with $G(\overline{\textbf{x}})=\textbf{0}$,
   then its normal cone to $S$ can be explicitly written as
   \begin{equation*}
   N_S(\overline{\textbf{x}})=\{\nabla G(\overline{\textbf{x}})\textbf{y}\;\vert\;\textbf{y}\in\mathbb{R}^m\}.
   \end{equation*}
 \end{remark}
 \end{definition}

\section{A New Unsupervised Feature Selection Model}
Let $X=[\textbf{x}_1,\textbf{x}_2,\cdots,\textbf{x}_n]\in\mathbb{R}^{d\times n}$ be the data matrix with each column $\textbf{x}_i\in\mathbb{R}^{d\times 1}$ being the $i$-th data point. Let $d$ and $n$ be the number of features and the number of sample, respectively. Suppose these $n$ samples are sampled from $c$ classes. Denote $F=[\textbf{f}_1,\cdots,\textbf{f}_n]^\intercal\in\{0, 1\}^{n\times c}$, where $\textbf{f}_i\in\{0,1\}^{c\times 1}$ is the cluster indicator vector for $\textbf{x}_i$. That is, the $j$-th element of $\textbf{f}_i$ is $1$, if $\textbf{x}_i$ is assigned to the $j$-th cluster, otherwise $0$.
Following the notation in \citet{yang2011nonnegative}, the scaled cluster indicator matrix $Y$ is defined as $$Y=[\textbf{y}_1, \textbf{y}_2,\cdots,\textbf{y}_n]^\intercal=F(F^\intercal F)^{-\frac{1}{2}},$$ where $y_i$ is the scaled cluster indicator of $\textbf{x}_i$. It turns out that $$Y^\intercal Y=(F^\intercal F)^{-\frac{1}{2}}F^{\intercal}F(F^\intercal F)^{-\frac{1}{2}}=I_c,$$ where $I_c\in\mathbb{R}^{c\times c}$ is an identity matrix.

At first, we use the clustering techniques to learn the scaled cluster indicators of data points, which can be regarded as pseudo class labels.  Given a set of data points $\textbf{x}_1,\textbf{x}_2,\cdots,\textbf{x}_n$ and some notion of similarity $s_{i,j}\geq 0$ between all pairs of data points $\textbf{x}_i$ and $\textbf{x}_j$, the intuitive goal of clustering is to divide the data points into several groups such that points in the same group are similar and points in different groups are dissimilar to each other. Spectral clustering is widely used in that it can effectively generate the pseudo labels from the graphs. In our method, we construct a $k$-nearest neighbors graph and choose the Gaussian kernel as the weight~\citep[see][]{cai2005document}. Specially, we define the affinity graph $S$ as follows:
\begin{equation*}
    S_{i,j}=
    \left\{
    \begin{array}{cc}
     \text{exp}(-\frac{\Vert\textbf{x}_i-\textbf{x}_j\Vert^2}{2\sigma^2}),&\textbf{x}_i\in\mathcal{N}_k(\textbf{x}_j)\;\text{or}\;\textbf{x}_j\in\mathcal{N}_k(\textbf{x}_i)  \\
       0,  &\text{otherwise},
    \end{array}
    \right.
\end{equation*}
where $\mathcal{N}_k(\textbf{x})$ is the set of $k$-nearest neighbors of $\textbf{x}$. The corresponding degree matrix can be constructed to $D$ with $D_{ii}=\sum_{j}S_{i,j}$, and Laplacian matrix $L$ of the normalized graph 
\citep[see][]{von2007tutorial} is calculated with $L=D^{-\frac{1}{2}}(D-S)D^{-\frac{1}{2}}$. Therefore, the local geometrical structure of data points can be obtained by:
\begin{equation}
    \min_{Y} \text{Tr}(Y^\intercal LY)\quad \text{s.t.}\quad Y=F(F^\intercal F)^{-\frac{1}{2}}.
\end{equation}
This is a discrete optimization problem as the entries of the feasible solution are only allowed to take two particular values, and of course it is a NP-hard problem. A well-known method is to discard the discreteness condition and relax the problem by allowing the entries of the matrix $Y$ to take arbitrary real values. Then, the relaxed problem becomes:
\begin{equation}\label{spec-clu}
    \min_{Y\in\mathbb{R}^{n\times c}} \text{Tr}(Y^\intercal LY)\quad\text{s.t.}\quad Y^\intercal Y=I_c.
\end{equation}

The next stage is to construct a sparse transformation $W$ on the data matrix $X$ by employing the scaled cluster indicator matrix $Y$, joined with two regularization terms. We can formulate it as:
\begin{equation}\label{regress}
    \min_{W\in\mathbb{R}^{d\times c}}\Vert Y-X^\intercal W\Vert_{2,1}+\beta\Vert W\Vert_{2,1}+\gamma\Vert W\Vert_F^2,
\end{equation}
where $W$ is a linear and low dimensional transformation matrix, and $\beta$ and $\gamma$ are the regularization parameters. In the objection function of the problem (\ref{regress}), the first term represents the linear transformation model to measure the association between the features and the pseudo class labels. The second term constructs the sparsity on the rows of the transformation matrix $W$, which is beneficial for selecting discriminative features. The third term is to avoid overfitting.

By integrating the spectral clustering (\ref{spec-clu}) and sparse regression (\ref{regress}) in a joint objective function, the model we proposed can be obtained as follows:
\begin{equation}\label{pro}
 \begin{aligned}
   &\min_{W,Y}\text{Tr}(Y^\intercal LY)+\alpha\Vert Y-X^\intercal W\Vert_{2,1}+\beta\Vert W\Vert_{2,1}+\gamma\Vert W\Vert_F^2\\
     &\text{s.t.}\quad Y^\intercal Y=I_c,\ Y\geq\textbf{O},
\end{aligned}
\end{equation}
where $\alpha$ is a tuning parameter.

\section{Algorithm Description of Our Inexact ALM Method}
In this section, we develop our inexact augmented Lagrangian method for solving problem (\ref{pro}), which is a nonconvex optimization with a nonsmooth objective function. By introducing auxiliary variables $U, V, \widehat{Y}, F$, the problem (\ref{pro}) can be transformed into the following equivalent:
\begin{equation}\label{pro2}
\begin{aligned}
 \mathop{\min}_{W,U,V,Y,F,\widehat{Y}}&\text{Tr}(Y^{\intercal}LY)+\alpha\Vert U\Vert_{2,1}+\beta\Vert V\Vert_{2,1}+\gamma\Vert W\Vert_F^2+\delta_{\mathbb{S}_1}(\widehat{Y})+\delta_{\mathbb{S}_2}(F)\\
  \text{s.t.}\qquad & \left\{
  \begin{array}{l}
   Y=F\\
   U=Y-X^{\intercal}W\\
   V=W\\
   Y=\widehat{Y}
  \end{array}
  \right.
\end{aligned}
\end{equation}
where $\mathbb{S}_1=\{\;\widehat{Y}\;\vert\;\widehat{Y}^{\intercal}\widehat{Y}=I_c\;\}$, $\mathbb{S}_2=\{\;F\;\vert\;\textbf{O}\leq F\leq\textbf{E}\;\}$.

Set $\lambda:=(\lambda_1,\lambda_2,\lambda_3,\lambda_4)\in\mathbb{R}^{n\times c}\times\mathbb{R}^{d\times c}\times\mathbb{R}^{n\times c}\times \mathbb{R}^{n\times c}$. The augmented Lagrangian function for (\ref{pro2}) is defined by
\begin{equation}
\begin{aligned}
  L(W,U,V,Y,F,\widehat{Y},\lambda;\rho):=&\text{Tr}(Y^{\intercal}LY)+\alpha\Vert U\Vert_{2,1}+\beta\Vert V\Vert_{2,1}+\gamma\Vert W\Vert_F^2+\delta_{\mathbb{S}_1}(\widehat{Y})\\&+\delta_{\mathbb{S}_2}(F)+\langle\lambda_1,Y-X^{\intercal}W-U\rangle+\langle\lambda_2,V-W\rangle\\&+\langle\lambda_3,Y-F\rangle+\langle\lambda_4,\widehat{Y}-Y\rangle+\frac{\rho}{2}(\Vert \widehat{Y}-Y\Vert_F^2+\Vert V-W\Vert_F^2\\&+\Vert Y-F\Vert_F^2+\Vert Y-X^{\intercal}W-U\Vert_F^2),
\end{aligned}
\end{equation}
where $\rho$ is a positive penalty parameter.

The ALM method can be used to alternately update the $(W, U, V, Y, F, \widehat{Y})$, the multiplier $\lambda$, and the penalty parameter $\rho$ to satisfy the accuracy condition (\ref{Critical}). We describe our inexact ALM method for solving (\ref{pro}) in details as follows.

\begin{algorithm}[ht]
\caption{Inexact ALM Method for (\ref{pro})}\label{algo1}
\begin{algorithmic}[0]
    \STATE {{\bf Input.} Data matrix $X\in\mathbb{R}^{d\times n}$. Given predefined parameters $\{\epsilon_k\}_{k\in\mathbb{N}}$, $\rho^1$, $\tau$, $r$, $\overline{\lambda}_{N, min}$,  $\overline{\lambda}_{N, max}$ $(N=1,2,3,4)$, and $\overline{\lambda}^1:=(\overline{\lambda}^1_1, \overline{\lambda}^1_2, \overline{\lambda}^1_3, \overline{\lambda}^1_4)$ that satisfy the condition in Remark \ref{rem1}, for $k=1,2,\ldots,$}\\
    \STATE {{\bf Output.} Sort all the $d$ features according to $\Vert W_i\Vert_2\, (i\in[d])$ and select the top $q$ ranked features.}\\
	 \textbf{Step 1:} Compute the subproblem
	\begin{equation}\label{sublagpro}
	 (W^k,U^k,V^k,Y^k,F^k,\widehat{Y}^k)\approx\mathop{\arg\min}_{W,U,V,Y,F,\widehat{Y}} L(W,U,V,Y,F,\widehat{Y},\overline{\lambda}^k;\rho^k)
	\end{equation}
	such that
	\begin{equation}\label{constrain}
	 \textbf{O}\leq F^k\leq E, (\widehat{Y}^{k})^{\intercal}\widehat{Y}^k=I_c,
	\end{equation}
	and there exists
	$\Theta^k\in \partial L(W^k,U^k,V^k,Y^k,F^k,\widehat{Y}^k,\overline{\lambda}^k;\rho^k)$
 satisfying
	\begin{equation}\label{Critical}
	\Vert\Theta^k\Vert_{\infty}\leq\epsilon_k.
	\end{equation}
	 \textbf{Step 2:} Update the multiplier as:
	 \begin{equation*}
	 \begin{aligned}
	 &\lambda_1^{k+1}=\overline{\lambda}_1^k+\rho^k(Y^k-X^\intercal W^k-U^k)& \nonumber\\
		&\lambda_2^{k+1}=\overline{\lambda}_2^k+\rho^k(V^k-W^k)& \nonumber\\
		&\lambda_3^{k+1}=\overline{\lambda}_3^k+\rho^k(Y^k-F^k)& \nonumber\\
		&\lambda_4^{k+1}=\overline{\lambda}_4^k+\rho^k(\widehat{Y}^k-Y^k)&\nonumber
	 \end{aligned}
   	\end{equation*}
	where $\overline{\lambda}_N^{k+1}=\Pi_{\Omega}\lambda_N^{k+1}$ and    $\Omega=\{\lambda_N:\overline{\lambda}_{N,min}\leq\lambda_N\leq\overline{\lambda}_{N,max}\}$, $N=1,2,3,4$.\\
	
    \textbf{Step 3:} Update the penalty parameter:
	\begin{equation}\label{rho}
		\rho^{k+1}=\left\{
		\begin{aligned}
			\rho^k\quad &\text{if}\ \Vert R_i^k\Vert_{\infty}\leq \tau\Vert R_i^{k-1}\Vert_{\infty}\ (i=1,2,3,4)\\
			r\rho^k\quad&\text{otherwise},
		\end{aligned}
	\right.
	\end{equation}
	where
	 $R_1^k=Y^k-X^\intercal W^k-U^k,\ R_2^k= V^k-W^k,
	         R_3^k=Y^k-F^k,\ R_4^k=\widehat{Y}^k-Y^k$.
\end{algorithmic}
\end{algorithm}
\begin{remark}\label{rem1}
	Set the parameters in Algorithm \ref{algo1} as follows: $\tau\in[0,1)$; $\rho^1>0$; $r>1$;  the sequence of positive tolerance parameters $\{\epsilon_k\}_{k\in\mathbb{N}}$ is chosen such that $\lim_{k \to +\infty}\epsilon_k=0$. The parameters $\overline{\lambda}_1^1, \overline{\lambda}_2^1, \overline{\lambda}_3^1, \overline{\lambda}_4^1, \overline{\lambda}_{N, min}, \overline{\lambda}_{N, max}$ are finite-valued matrices satisfying
	\begin{equation*}
	-\infty<[\overline{\lambda}_{N, min}]_{i,j}<[\overline{\lambda}_{N, max}]_{i,j}<+\infty\ \forall i,j,\ \; N=1,2,3,4.
	\end{equation*}
\end{remark}

In Algorithm \ref{algo1}, the most important is how to solve (\ref{sublagpro}-\ref{Critical}). That is, given the current iterate $(W^k, U^k, V^k$, $Y^k, F^k, \widehat{Y}^k)$, how to generate the next iterate $(W^{k+1}$, $U^{k+1}$, $V^{k+1}$, $Y^{k+1}$, $F^{k+1}$, $\widehat{Y}^{k+1})$ is very critical. We propose a PAM method to solve (\ref{sublagpro}-\ref{Critical}) and show that there exists a solution for (\ref{sublagpro}-\ref{Critical}) and such a solution can be efficiently computed  as $\epsilon_k\!\downarrow 0$, i.e., Step 1 in Algorithm \ref{algo1} is well defined. We will establish the PAM method and its convergence in the following two subsections.

\subsection{PAM for Augmented Lagrangian Subproblem}
In this subsection, we present more details on implementing Algorithm \ref{algo1} and construct a PAM method to solve the augmented Lagrangian subproblem with arbitrarily given accuracy.

It can be seen that the constraint (\ref{Critical}) is an $\epsilon^k$-perturbation of the critical point property
\begin{equation}\label{criL}
    0\in\partial L(W^k,U^k,V^k,Y^k,F^k,\widehat{Y}^k,\overline{\lambda}^k;\rho^k).
\end{equation}
In fact, the algorithm we proposed to deal with (\ref{criL}) is a regularized proximal six-block Gauss-Seidel method. At the $k$th outer iteration,
the problem (\ref{criL}) can be solved with arbitrary accuracy using the following alternating minimizing procedure:
\begin{itemize}
    \item[(a)] Update $W^{k,j}$:
    \begin{small}
    \begin{equation}\label{update_W}
    W^{k,j}\in \mathop{\arg\min}\;\{L(W, U^{k,j-1}, V^{k,j-1}, Y^{k,j-1}, F^{k,j-1}, \widehat{Y}^{k,j-1}, \overline{\lambda}^{k};\rho^k)+
     \frac{C_1^{k,j-1}}{2}\Vert W-W^{k,j-1}\Vert_F^2\},
    \end{equation}
    \end{small}
    \item[(b)] Update $U^{k,j}$:
     \begin{small}
    \begin{equation}\label{update_U}
    U^{k,j}\in \mathop{\arg\min}\;\{L(W^{k,j},U,V^{k,j-1},Y^{k,j-1},F^{k,j-1},\widehat{Y}^{k,j-1},\overline{\lambda}^{k};\rho^k)+
    \frac{C_2^{k,j-1}}{2}\Vert U-U^{k,j-1}\Vert_F^2\},
    \end{equation}
     \end{small}
    \item[(c)] Update $V^{k,j}$:
     \begin{small}
    \begin{equation}\label{update_V}
    V^{k,j}\in \mathop{\arg\min}\;\{L(W^{k,j},U^{k,j},V,Y^{k,j-1},F^{k,j-1},\widehat{Y}^{k,j-1},\overline{\lambda}^{k};\rho^k)+
   \frac{C_3^{k,j-1}}{2}\Vert V-V^{k,j-1}\Vert_F^2\},
    \end{equation}
     \end{small}
      \item[(d)] Update $Y^{k,j}$:
      \begin{small}
    \begin{equation}\label{update_Y}
    Y^{k,j}\in \mathop{\arg\min}\;\{L(W^{k,j},U^{k,j},V^{k,j},Y,F^{k,j-1},\widehat{Y}^{k,j-1},\overline{\lambda}^{k};\rho^k)+
   \frac{C_4^{k,j-1}}{2}\Vert Y-Y^{k,j-1}\Vert_F^2\},
    \end{equation}
     \end{small}
      \item[(e)] Update $F^{k,j}$:
       \begin{small}
    \begin{equation}\label{update_F}
    F^{k,j}\in \mathop{\arg\min}\;\{L(W^{k,j},U^{k,j},V^{k,j},Y^{k,j},F,\widehat{Y}^{k,j-1},\overline{\lambda}^{k};\rho^k)+
    \frac{C_5^{k,j-1}}{2}\Vert F-F^{k,j-1}\Vert_F^2\},
    \end{equation}
     \end{small}
      \item[(f)] Update $\widehat{Y}^{k,j}$:
       \begin{small}
    \begin{equation}\label{update_Y_ba}
    \widehat{Y}^{k,j}\in \mathop{\arg\min}\;\{L(W^{k,j},U^{k,j},V^{k,j},Y^{k,j},F^{k,j},\widehat{Y},\overline{\lambda}^{k};\rho^k)+
    \frac{C_6^{k,j-1}}{2}\Vert \widehat{Y}-\widehat{Y}^{k,j-1}\Vert_F^2\},
    \end{equation}
     \end{small}
\end{itemize}
where the proximal parameters $\{C_i^{k,j}\}_{k,j}$ need to satisfy
\begin{equation*}
    0< \underline{C}\leq C_i^{k,j}< \overline{C}< \infty,\ k,j\in\mathbb{N},\ i=1,2,3,4,5,6,
\end{equation*}
for some predetermined positive constants $\underline{C}$ and $\overline{C}$.

By direct calculation, the subproblems in (\ref{update_W}-\ref{update_Y_ba}) have closed-form solutions as follows:
\begin{itemize}
    \item[(a)] For  (\ref{update_W}):
    $$W^{k,j}=\left(\frac{1}{a}I_d-\frac{\rho^k}{a^2}X(I_n+\frac{\rho^k}{a}X^{\intercal}X)^{-1}X^{\intercal}\right)Z,$$
        where   $a=2\gamma+\rho^k+C_1^{k,j-1}$ and
      $$Z=X\overline{ \lambda}_1^k+\overline{ \lambda}_2^k+\rho^{k}XY^{k,j-1}-\rho^{k}XU^{k,j-1}+\rho^{k}V^{k,j-1}+C_1^{k,j-1}W^{k,j-1}.$$
 \item[(b)] For  (\ref{update_U}):
    $U^{k,j}=(U_i^{k,j})_{i\in[n]}$, where $U_i^{k,j}$ is the row vector of  $U^{k,j}$.

    Set $$N=Y^{k,j-1}-X^\intercal W^{k,j}+\frac{\overline{\lambda}_1^k}{\rho^k}$$ and denote $n_i$ as the $i$-th row vector of $N$. Then,
    \begin{equation*}
    \begin{aligned}
    U_i^{k,j}=\max\Big\{&0, 1-\frac{\alpha}{\Vert\rho^kn_i+C_2^{k,j-1}U_i^{k,j-1}\Vert_2}\Big\}\left[\frac{\rho^k}{\rho^k+C_2^{k,j-1}}n_i+\frac{C_2^{k,j-1}}{\rho^k+C_2^{k,j-1}}U_i^{k,j-1}\right].
    \end{aligned}
    \end{equation*}
 \item[(c)] For  (\ref{update_V}):
      $V^{k,j}=(V_i^{k,j})_{i\in[d]}$ , where  $V_i^{k,j}$
       is the row vector of $V^{k,j}$.

      Set $$M=W^{k,j}-\frac{\overline{\lambda}_2^k}{\rho^k}$$ and its row vector is denoted by $m_i$. Then,
      \begin{equation*}
       \begin{aligned}
    	V_i^{k,j}=\max\Big\{&0, 1-\frac{\beta}{\Vert\rho^km_i+C_3^{k,j-1}V_i^{k,j-1}\Vert_2}\Big\}\left[\frac{\rho^k}{\rho^k+C_3^{k,j-1}}m_i+\frac{C_3^{k,j-1}}{\rho^k+C_3^{k,j-1}}V_i^{k,j-1}\right].
       \end{aligned}
       \end{equation*}
\item[(d)] For (\ref{update_Y}):\\
       $$Y^{k,j}=[2L+(3\rho^k+C_4^{k,j-1})I]^{-1}P,$$ where $$P=\overline{\lambda}_4^k-\overline{ \lambda}_3^k-\overline{\lambda}_1^k+\rho^kX^{\intercal}W^{k,j}+\rho^kU^{k,j}+\rho^kF^{k,j-1}+\rho^k\widehat{Y}^{k,j-1}+C_4^{k,j-1}Y^{k,j-1}.$$
\item[(e)] For  (\ref{update_F}):\\
      $F^{k,j}=(F_{s,t}^{k,j})_{s\in[n],t\in[c]}$ and  $F_{s,t}^{k,j}=\Pi_{[0,1]}A_{st},$ where  $$A=(A_{s,t})_{s\in[n],t\in[c]}=\frac{\rho^k(Y^{k,j}+\frac{\overline{\lambda}_3^k}{\rho^k})+C_5^{k,j-1}F^{k,j-1}}{\rho^k+C_5^{k,j-1}}.$$
\item[(f)] For  (\ref{update_Y_ba}):\\
     $\widehat{Y}^{k,j}=UI_{n\times c}V^{\intercal}$, where $U\in \mathbb{R}^{n\times n}, V\in \mathbb{R}^{c\times c}$ are two orthogonal matrices and $\sum\in\mathbb{R}^{n\times c}$ is a diagonal matrix satisfying the \text{SVD} factorization
      \begin{equation*}
 	  \frac{\rho^k(Y^{k,j}-\frac{\overline{\lambda}_4^k}{\rho^k})+C_6^{k,j-1}\widehat{Y}^{k,j-1}}{\rho^k+C_6^{k,j-1}}=U\sum V^{\intercal}.
      \end{equation*}
\end{itemize}

The iteration is terminated if there exists $\Theta^{k,j}\in\partial L(W^{k,j},U^{k,j},V^{k,j},Y^{k,j},F^{k,j},\widehat{Y}^{k,j},\overline{\lambda}^k;\rho^k)$ satisfying
\begin{equation*}
   \Vert\Theta^{k,j}\Vert_{\infty}\leq\epsilon_k,\quad  \textbf{O}\leq F^{k,j}\leq \textbf{E}, \quad (\widehat{Y}^{k,j})^{\intercal}\widehat{Y}^{k,j}=I_c,
\end{equation*}
where $\Theta^{k,j}:=(\Theta_1^{k,j},\Theta_2^{k,j},\Theta_3^{k,j},\Theta_4^{k,j},\Theta_5^{k,j},\Theta_6^{k,j})\in\mathbb{R}^{d\times c}\times\mathbb{R}^{n\times c}\times\mathbb{R}^{d\times c}\times\mathbb{R}^{n\times c}\times\mathbb{R}^{n\times c}\times \mathbb{R}^{n\times c}$ is concretely expressed in the form
 \begin{equation}\label{theta}
 \left\{
\begin{aligned}
    \Theta_1^{k,j}:=&\rho^k X(Y^{k,j-1}-Y^{k,j})+\rho^k X(U^{k,j}-U^{k,j-1})+\rho^k(V^{k,j-1}-V^{k,j})\\&+
    C_1^{k,j-1}(W^{k,j-1}-W^{k,j})\\
    \Theta_2^{k,j}:=&\rho^k(Y^{k,j-1}-Y^{k,j})+C_2^{k,j-1}(U^{k,j-1}-U^{k,j})\\
    \Theta_3^{k,j}:=&C_3^{k,j-1}(V^{k,j-1}-V^{k,j})\\
    \Theta_4^{k,j}:=&\rho^k(F^{k,j-1}-F^{k,j})+\rho^k(\widehat{Y}^{k,j-1}-\widehat{Y}^{k,j})+C_4^{k,j-1}(Y^{k,j-1}-Y^{k,j})\\
    \Theta_5^{k,j}:=&C_5^{k,j-1}(F^{k,j-1}-F^{k,j})\\
    \Theta_6^{k,j}:=&C_6^{k,j-1}(\widehat{Y}^{k,j-1}-\widehat{Y}^{k,j}).
\end{aligned}
\right.
\end{equation}

We summarize the algorithmic framework of PAM in Algorithm \ref{algo2}, whose convergence analysis is established in the next subsection.
\begin{algorithm}
\caption{PAM Method for (\ref{sublagpro}-\ref{Critical})}\label{algo2}
\begin{algorithmic}
 \STATE {{\bf Input:}\\
Let
 ($W^{1,0}$,$U^{1,0}$,$V^{1,0}$,$Y^{1,0}$,$F^{1,0}$,$\widehat{Y}^{1,0}$)
 be any initialization;\\ For $k\!\geq\!2$, set
 $(W^{k,0},U^{k,0},V^{k,0},Y^{k,0},F^{k,0},\widehat{Y}^{k,0})=(W^{k-1},U^{k-1},V^{k-1},Y^{k-1},F^{k-1},\widehat{Y}^{k-1})$;}
 \STATE {{\bf Output:}
 ($W^k$,$U^k$,$V^k$,$Y^k$,$F^k$,$\widehat{Y}^k$);}
 \STATE \textbf{Step 1:} Reiterate on $j$ until $\Vert\Theta^{k,j}\Vert_{\infty}\leq\epsilon_k$, where $\Theta^{k,j}$ is defined by (\ref{theta});\\
 \begin{itemize}
    \item[1.]Compute $W^{k,j}$ by (\ref{update_W});
    \item[2.] Compute $U^{k,j}$ by (\ref{update_U});
    \item[3.] Compute $V^{k,j}$ by (\ref{update_V});
    \item[4.] Compute $Y^{k,j}$ by (\ref{update_Y});
    \item[5.] Compute $F^{k,j}$ by (\ref{update_F});
    \item[6.] Compute $\widehat{Y}^{k,j}$ by (\ref{update_Y_ba});
 \end{itemize}
\STATE \textbf{Step 2:} Set
\begin{equation*}
 (W^k,U^k,V^k,Y^k,F^k,\widehat{Y}^k):=
 (W^{k,j},U^{k,j},V^{k,j},Y^{k,j},F^{k,j},\widehat{Y}^{k,j})
\end{equation*}
 and $\Theta^k:=\Theta^{k.j}$.
\end{algorithmic}	
\end{algorithm}

\subsection{Convergence Analysis for Algorithm \ref{algo2}}

For the sake of notation simplicity, we fix some notations. We define $T:=(W,U,V,Y,F,\widehat{Y})$ and $L_k(T):=L(W,U,V,Y,F,\widehat{Y},\overline{\lambda}^k;\rho^k)$ for the $k$-th outer iteration. In this part, we will establish the global convergence for Algorithm \ref{algo2}, in other words, we can derive that the solution set of (\ref{sublagpro}-\ref{Critical}) is nonempty and hence Algorithm \ref{algo1} is well defined with using Algorithm \ref{algo2} to solve the subproblem in Step 1.

We first claim that $\Theta^{k,j}:=(\Theta_1^{k,j},\Theta_2^{k,j},\Theta_3^{k,j},\Theta_4^{k,j},\Theta_5^{k,j},\Theta_6^{k,j})$ defined by (\ref{theta}) must satisfy
 \begin{equation*}
     \Theta^{k,j}\in\partial L_k(T^{k,j})\quad \forall\,j\in\mathbb{N}.
 \end{equation*}
 for each $k\in\mathbb{N}$.

 Considering the structure of  $L_k(T)$, it can be split as
\begin{equation}
\begin{aligned}\label{def_Lk}
  L_k(T)=f_1(W)+f_2(Y)+f_3(U)+f_4(V)
  +f_5(F)+f_6(\widehat{Y})+g_k(T),
  \end{aligned}
\end{equation}
where
\begin{equation*}
    \left\{
    \begin{aligned}
        f_1(W):=&\gamma\Vert W\Vert_F^2;\qquad f_2(Y):=\text{Tr}(Y^{\intercal}LY);\qquad f_3(U):=\alpha\Vert U\Vert_{2,1};\\
        f_4(V):=&\beta\Vert V\Vert_{2,1};\qquad f_5(F):=\delta_{\mathbb{S}_2}(F);\qquad f_6(\widehat{Y}):=\delta_{\mathbb{S}_1}(\widehat{Y});\\
        g_k(T):=&\langle\overline{\lambda}_1^k,Y-X^{\intercal}W-U\rangle+
                                  \langle\overline{\lambda}_2^k,V-W\rangle+
                                  \langle\overline{\lambda}_3^k,Y-F\rangle+\langle\overline{\lambda}_4^k,\widehat{Y}-Y\rangle\\&+\frac{\rho^k}{2}\Big(\Vert Y-X^{\intercal}W-U\Vert_F^2
                +\Vert \widehat{Y}-Y\Vert_F^2
                +\Vert V-W\Vert_F^2+ \Vert Y-F\Vert_F^2\Big).
    \end{aligned}
    \right.
\end{equation*}
Then, a direct calculation shows that $\Theta^{k,j}:=(\Theta_1^{k,j},\Theta_2^{k,j},\Theta_3^{k,j},\Theta_4^{k,j},\Theta_5^{k,j},\Theta_6^{k,j})$ defined by (\ref{theta}) can be expressed in terms of partial derivatives of $g:=g_k$ as
\begin{small}
\begin{equation}\label{theta2}
    \left\{
    \begin{aligned}
     \Theta_1^{k,j}=&-\nabla_W g(W^{k,j},U^{k,j-1},V^{k,j-1},Y^{k,j-1},F^{k,j-1},\widehat{Y}^{k,j-1})-C_1^{k,j-1}(W^{k,j}-W^{k,j-1})+\nabla_W g(T^{k,j}),\\
     \Theta_2^{k,j}=&-\nabla_U g(W^{k,j},U^{k,j},V^{k,j-1},Y^{k,j-1},F^{k,j-1},\widehat{Y}^{k,j-1})-C_2^{k,j-1}(U^{k,j}-U^{k,j-1})+\nabla_U g(T^{k,j}),\\
     \Theta_3^{k,j}=&-\nabla_V g(W^{k,j},U^{k,j},V^{k,j},Y^{k,j-1},F^{k,j-1},\widehat{Y}^{k,j-1})-C_3^{k,j-1}(V^{k,j}-V^{k,j-1})+\nabla_V g(T^{k,j}),\\
     \Theta_4^{k,j}=&-\nabla_Y g(W^{k,j},U^{k,j},V^{k,j},Y^{k,j},F^{k,j-1},\widehat{Y}^{k,j-1})-C_4^{k,j-1}(Y^{k,j}-Y^{k,j-1})+\nabla_Y g(T^{k,j}),\\
     \Theta_5^{k,j}=&-\nabla_F g(W^{k,j},U^{k,j},V^{k,j},Y^{k,j},F^{k,j},\widehat{Y}^{k,j-1})-C_5^{k,j-1}(F^{k,j}-F^{k,j-1})+\nabla_F g(T^{k,j}),\\
     \Theta_6^{k,j}=&-\nabla_{\widehat{Y}} g(W^{k,j},U^{k,j},V^{k,j},Y^{k,j},F^{k,j},\widehat{Y}^{k,j})-C_6^{k,j-1}(\widehat{Y}^{k,j}-\widehat{Y}^{k,j-1})+\nabla_{\widehat{Y}} g(T^{k,j}).
    \end{aligned}
    \right.
\end{equation}
\end{small}
Moreover, given \begin{footnotesize}
$(W^{k,j-1},U^{k,j-1},V^{k,j-1},Y^{k,j-1},F^{k,j-1},\widehat{Y}^{k,j-1})$\end{footnotesize}, using 8.8(c) in \citet{rockafellar2009variational}, the necessary first-order optimality conditions for the subproblem (\ref{update_W}-\ref{update_Y_ba}) are the following system:
\begin{small}
\begin{equation}\label{opti}
\left\{
\begin{aligned}
 &\nabla_W g(W^{k,j},U^{k,j-1},V^{k,j-1},Y^{k,j-1},F^{k,j-1},\widehat{Y}^{k,j-1})+\nabla f_1(W^{k,j})+C_1^{k,j-1}(W^{k,j}-W^{k,j-1})=\textbf{O},\\
 &\xi^{k,j}+\nabla_U g(W^{k,j},U^{k,j},V^{k,j-1},Y^{k,j-1},F^{k,j-1},\widehat{Y}^{k,j-1})+C_2^{k,j-1}(U^{k,j}-U^{k,j-1})=\textbf{O},\\
 &\zeta^{k,j}+\nabla_V g(W^{k,j},U^{k,j},V^{k,j},Y^{k,j-1},F^{k,j-1},\widehat{Y}^{k,j-1})+C_3^{k,j-1}(V^{k,j}-V^{k,j-1})=\textbf{O},\\
 &\nabla f_2(Y^{k,j})+\nabla_Y g(W^{k,j},U^{k,j},V^{k,j},Y^{k,j},F^{k,j-1},\widehat{Y}^{k,j-1})+C_4^{k,j-1}(Y^{k,j}-Y^{k,j-1})=\textbf{O},\\
 &\vartheta^{k,j}+\nabla_F g(W^{k,j},U^{k,j},V^{k,j},Y^{k,j},F^{k,j},\widehat{Y}^{k,j-1})+C_5^{k,j-1}(F^{k,j}-F^{k,j-1})=\textbf{O},\\
 &\varsigma^{k,j}+\nabla_{\widehat{Y}} g(W^{k,j},U^{k,j},V^{k,j},Y^{k,j},F^{k,j},\widehat{Y}^{k,j})+C_6^{k,j-1}(\widehat{Y}^{k,j}-\widehat{Y}^{k,j-1})=\textbf{O},
\end{aligned}
\right.
\end{equation}
\end{small}
where $\xi^{k,j}\in\partial f_3(U^{k,j})$, $\zeta^{k,j}\in\partial f_4(V^{k,j})$, $\vartheta^{k,j}\in\partial f_5(F^{k,j})$, and $\varsigma^{k,j}\in\partial f_6(\widehat{Y}^{k,j})$. Combining (\ref{theta2}) with (\ref{opti}), we have
\begin{equation*}
    \left\{
    \begin{aligned}
    &\Theta_1^{k,j}=\nabla f_1(W^{k,j})+\nabla_W g(T^{k,j}),\\
    &\Theta_2^{k,j}=\xi^{k,j}+\nabla_U g(T^{k,j}),\\
    &\Theta_3^{k,j}=\zeta^{k,j}+\nabla_V g(T^{k,j}),\\
    &\Theta_4^{k,j}=\nabla f_2(Y^{k,j})+\nabla_Y g(T^{k,j}),\\
    &\Theta_5^{k,j}=\vartheta^{k,j}+\nabla_F g(T^{k,j}),\\
    &\Theta_6^{k,j}=\varsigma^{k,j}+\nabla_{\widehat{Y}} g(T^{k,j}).\\
    \end{aligned}
    \right.
\end{equation*}
By Proposition 2.1 in \cite{attouch2010proximal}, for each $k\in\mathbb{N}$, we get
\begin{equation*}
    \Theta^{k,j}\in\partial L_k(W^{k,j},U^{k,j},V^{k,j},Y^{k,j},F^{k,j},\widehat{Y}^{k,j}),\quad \forall\,j\in\mathbb{N}.
\end{equation*}

Thus, we can obtain the following theorem which shows that Algorithm \ref{algo2} converges, which means the Step 1 of Algorithm \ref{algo1} is well defined. The proof is based on a general result established in \citet[Theorem 6.2]{attouch2013convergence}.
\begin{theorem}\label{theo}
Set parameters $r> 1, \rho^1> 0$ in Algorithm $\ref{algo1}$. For each $k\in\mathbb{N}$, we have the sequence $\{T^{k,j}\}_{j\in\mathbb{N}}$ produced by Algorithm \ref{algo2} converges and \begin{equation*}
    \Vert\Theta^{k,j}\Vert_{\infty}\rightarrow 0 \quad as\;j\rightarrow\infty.
\end{equation*}
\end{theorem}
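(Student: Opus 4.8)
The plan is to recognize that, for each fixed outer index $k$, the inner iteration (\ref{update_W})--(\ref{update_Y_ba}) is precisely a proximal alternating (Gauss--Seidel) minimization of $L_k$ under the six-block splitting (\ref{def_Lk}), and then to invoke the abstract convergence theorem \citet[Theorem 6.2]{attouch2013convergence}. That result guarantees that the whole sequence converges to a critical point of $L_k$ provided three ingredients hold: a sufficient-decrease estimate, a subgradient/relative-error bound along the iterates, and the Kurdyka--\L ojasiewicz (KL) property of $L_k$, together with boundedness of $\{T^{k,j}\}$. The bulk of the work is verifying these hypotheses for our specific $L_k$.

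First I would check the structural regularity. Each of $f_1,\dots,f_6$ in (\ref{def_Lk}) is proper and lower semicontinuous: $f_1,f_2$ are polynomials, $f_3,f_4$ are finite $l_{2,1}$-norms, and $f_5=\delta_{\mathbb{S}_2}$, $f_6=\delta_{\mathbb{S}_1}$ are indicators of the closed sets $\mathbb{S}_2$ (a box) and $\mathbb{S}_1$ (the Stiefel manifold). The coupling term $g_k$ is a quadratic polynomial, hence $C^1$ with a gradient that is Lipschitz continuous on bounded sets, and the proximal weights satisfy $0<\underline{C}\le C_i^{k,j}\le\overline{C}<\infty$ by construction. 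The sufficient-decrease condition is immediate: each block is an exact minimizer of $L_k$ plus a term $\frac{C_i^{k,j-1}}{2}\Vert\cdot-\cdot^{\,k,j-1}\Vert_F^2$, so summing the six inequalities yields $L_k(T^{k,j})+\frac{\underline{C}}{2}\Vert T^{k,j}-T^{k,j-1}\Vert_F^2\le L_k(T^{k,j-1})$. The relative-error bound is read directly off (\ref{theta}): membership $\Theta^{k,j}\in\partial L_k(T^{k,j})$ is already established above via Proposition 2.1 of \cite{attouch2010proximal}, and every $\Theta_i^{k,j}$ is a linear combination, with coefficients bounded in terms of $\rho^k$, $\overline{C}$ and $\Vert X\Vert$, of the successive differences $T^{k,j}-T^{k,j-1}$, so $\Vert\Theta^{k,j}\Vert_F\le b\,\Vert T^{k,j}-T^{k,j-1}\Vert_F$ for a constant $b$.

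Next I would establish the two remaining points. For the KL property I would argue that $L_k$ is semialgebraic: $f_1,f_2,g_k$ are polynomials; $f_3,f_4$ are semialgebraic because the Euclidean norm is; and $\mathbb{S}_1=\{\widehat{Y}:\widehat{Y}^\intercal\widehat{Y}=I_c\}$ and $\mathbb{S}_2=\{F:\textbf{O}\le F\le\textbf{E}\}$ are semialgebraic sets (defined by polynomial equalities and inequalities), so their indicators are semialgebraic. A finite sum of semialgebraic functions is semialgebraic, hence $L_k$ satisfies the KL property. For boundedness I would use coercivity: the sufficient-decrease inequality forces $\{T^{k,j}\}$ to remain in the level set $\{T:L_k(T)\le L_k(T^{k,0})\}$, and this set is bounded because $\gamma\Vert W\Vert_F^2$ controls $W$, while $\alpha\Vert U\Vert_{2,1}$ and $\beta\Vert V\Vert_{2,1}$ control $U$ and $V$, the compact sets $\mathbb{S}_1,\mathbb{S}_2$ bound $\widehat{Y}$ and $F$, and the penalty $\frac{\rho^k}{2}\Vert\widehat{Y}-Y\Vert_F^2$ together with the boundedness of $\widehat{Y}$ bounds $Y$. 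This last point is the delicate one, since $\text{Tr}(Y^\intercal LY)$ alone does not control $\Vert Y\Vert$ because $L$ is only positive semidefinite.

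Finally, with all hypotheses verified, \citet[Theorem 6.2]{attouch2013convergence} gives that $\{T^{k,j}\}_{j}$ has finite length and converges to a point $T^{k,*}$ with $0\in\partial L_k(T^{k,*})$; in particular $\Vert T^{k,j}-T^{k,j-1}\Vert_F\to 0$. Combining this with the relative-error bound $\Vert\Theta^{k,j}\Vert_F\le b\,\Vert T^{k,j}-T^{k,j-1}\Vert_F$ yields $\Vert\Theta^{k,j}\Vert_\infty\le\Vert\Theta^{k,j}\Vert_F\to 0$, which is the claim. I expect the coercivity argument bounding the $Y$-block, and to a lesser extent the bookkeeping needed to match our six-block scheme exactly to the hypotheses of the abstract theorem, to be the main obstacle.
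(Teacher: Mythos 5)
Your proposal follows essentially the same route as the paper's proof: verify that $L_k$ is semialgebraic (hence a KL function) and that the $f_i$ are proper lower semicontinuous with $g_k$ smooth, establish boundedness of the inner sequence by combining coercivity of $L_k$ with the sufficient-decrease inequalities from the six proximal blocks, and then invoke \citet[Theorem 6.2]{attouch2013convergence} to obtain finite length and convergence to a critical point, from which $\Vert\Theta^{k,j}\Vert_\infty\rightarrow 0$ follows by the form (\ref{theta}) of $\Theta^{k,j}$. The only differences are cosmetic: you phrase boundedness as a level-set argument while the paper argues by contradiction from the same two ingredients, and your handling of coercivity in the $Y$-block (via the penalty $\frac{\rho^k}{2}\Vert\widehat{Y}-Y\Vert_F^2$ together with compactness of $\mathbb{S}_1$, since $\text{Tr}(Y^\intercal LY)$ alone cannot control $\Vert Y\Vert$) is in fact stated more explicitly than in the paper, which only records that the residual terms are bounded below after completing the square.
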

\begin{proof}
We know that $\mathbb{S}_1$ and $\mathbb{S}_2$ are semi-algebraic sets and their indicator functions are semi-algebraic \citep{bolte2014proximal}. The quadratic functions $\textbf{x}^\intercal L\textbf{x}$ and $\Vert\textbf{x}\Vert_p (\text{p\;is\;rational})$ are also semi-algebraic. Using the fact that composition of semi-algebraic functions is semi-algebraic, we derive that $L_k$ is a semi-algebraic function. Also known is that the semi-algebraic function is a Kurdyka-{\L}ojasiewicz (KL) function~\cite[Appendic]{bolte2014proximal}. Thus, $L_k$ is a KL function. From the expression (\ref{def_Lk}) of $L_k$, it can be seen that the function $L_k$ satisfies: (i)$f_i$ is a proper lower semicontinuous function, $i=1,2,3,4,5,6$; (ii) $g_k$ is a $C^1$-function with locally Lipschitz continuous gradient.

Next, we will verify that for each $k\in\mathbb{N}$, $L_k$ is bounded below and the sequence $\{T^{k,j}\}_{j\in\mathbb{N}}$ is bounded.
For each $k\in\mathbb{N}$, the lower boundness of $L_k$ is proved by showing that $L_k$ is a coercive function (i.e., $L_k(T)\rightarrow +\infty$ when $\Vert T\Vert_{\infty}\rightarrow\infty$), provided that the parameters $r>1, \rho^1>0$. Clearly, the five terms $f_1, f_3, f_4, f_5, f_6$ of $L_k$ in (\ref{def_Lk}) are coercive. Then the residual terms are
\begin{equation*}\label{f2+gk}
\begin{aligned}
f_2(Y)+g_k(W,U,V,Y,F,\widehat{Y})
=&\text{Tr}(Y^\intercal LY)+\langle\overline{\lambda}_1^k,Y-X^{\intercal}W-U\rangle+\langle\overline{\lambda}_2^k,V-W\rangle
+                           \langle\overline{\lambda}_3^k,Y-F\rangle\\&+\langle\overline{\lambda}_4^k,\widehat{Y}-Y\rangle+\frac{\rho^k}{2}\Big(\Vert \widehat{Y}-Y\Vert_F^2+
                                   \Vert V-W\Vert_F^2+\Vert Y-F\Vert_F^2\\&+\Vert Y-X^{\intercal}W-U\Vert_F^2\Big).
\end{aligned}
\end{equation*}
We can rewrite it as
\begin{equation*}
\begin{aligned}
    f_2(Y)+g_k(W,U,V,Y,F,\widehat{Y})
    =g_{1,k}(W,U,Y)+g_{2,k}(W,V,Y,F,\widehat{Y}),
    \end{aligned}
\end{equation*}
where
\begin{equation*}
\begin{aligned}
   g_{1,k}(W,U,Y)=&\text{Tr}(Y^\intercal LY)+\langle\overline{\lambda}_1^k,Y-X^{\intercal}W-U\rangle+\frac{\rho^k}{2}\Vert Y-X^{\intercal}W-U\Vert_F^2
   \end{aligned}
\end{equation*}
and
\begin{equation*}
\begin{aligned}
    g_{2,k}(W,V,Y,F,\widehat{Y})=&\langle\overline{\lambda}_2^k,V-W\rangle+\langle\overline{\lambda}_3^k,Y-F\rangle+\langle\overline{\lambda}_4^k,\widehat{Y}-Y\rangle+\frac{\rho^k}{2}(\Vert \widehat{Y}-Y\Vert_F^2+
           \Vert V-W\Vert_F^2\\&+\Vert Y-F\Vert_F^2).
    \end{aligned}
\end{equation*}
Let us observe that
\begin{equation*}
\begin{aligned}
g_{1,k}(W,U,Y)=&\text{Tr}(Y^\intercal LY)+\frac{\rho^k}{2}\Vert Y-X^{\intercal}W-U+\frac{\overline{\lambda}_1^k}{\rho^k}\Vert_F^2-\frac{\rho^k}{2}\Vert\frac{\overline{\lambda}_1^k}{\rho^k}\Vert_F^2
\end{aligned}
\end{equation*}
and
\begin{equation*}
\begin{aligned}
g_{2,k}(W,V,Y,F,\widehat{Y})=&\frac{\rho^k}{2}\Big[\Vert V-W+\frac{\overline{\lambda}_2^k}{\rho^k}\Vert_F^2+\Vert Y-F+\frac{\overline{\lambda}_3^k}{\rho^k}\Vert_F^2+\Vert\widehat{Y}-Y+\frac{\overline{\lambda}_4^k}{\rho^k}\Vert_F^2-(\Vert\frac{\overline{\lambda}_2^k}{\rho^k}\Vert_F^2\\&+\Vert\frac{\overline{\lambda}_3^k}{\rho^k}\Vert_F^2+\Vert\frac{\overline{\lambda}_4^k}{\rho^k}\Vert_F^2)\Big].
\end{aligned}
\end{equation*}
Thus, $g_{1,k}(W,U,Y)$ and $g_{2,k}(W,V,Y,F,\widehat{Y})$ are all bounded below. Furthermore, the functions $\{L_k\}_k\in\mathbb{N}$ defined by (\ref{def_Lk}) are all coercive.

The boundedness of the sequence $\{T^{k,j}\}_{j\in\mathbb{N}}$ is proved by contradiction. On the one hand, suppose that the sequence $\{T^{k_0,j}\}_{j\in\mathbb{N}}$ is unbounded, and so $\lim_{j\rightarrow\infty}\Vert T^{k_0,j}\Vert=\infty$. Then, it follows from the coercive of $L_{k_0}(T)$ that the sequence $\{L_{k_0}(T^{k_0,j})\}_{j\in\mathbb{N}}$ should diverge to infinity. On the other hand, let
\begin{align*}
&\widetilde{L}^1_{k_0,j}\!=\!L_{k_0}(W^{k_0,j+1}, U^{k_0,j}, V^{k_0,j}, Y^{k_0,j}, F^{k_0,j}, \widehat{Y}^{k_0,j}),\\ &\widetilde{L}^2_{k_0,j}\!=\!L_{k_0}(W^{k_0,j+1}, U^{k_0,j+1}, V^{k_0,j}, Y^{k_0,j}, F^{k_0,j}, \widehat{Y}^{k_0,j}),\\ &\widetilde{L}^3_{k_0,j}\!=\!L_{k_0}(W^{k_0,j+1}, U^{k_0,j+1}, V^{k_0,j+1}, Y^{k_0,j}, F^{k_0,j}, \widehat{Y}^{k_0,j}),\\ &\widetilde{L}^4_{k_0,j}\!=\!L_{k_0}(W^{k_0,j+1}, U^{k_0,j+1}, V^{k_0,j+1}, Y^{k_0,j+1}, F^{k_0,j}, \widehat{Y}^{k_0,j}),\\ &\widetilde{L}^5_{k_0,j}\!=\!L_{k_0}(W^{k_0,j+1}, U^{k_0,j+1}, V^{k_0,j+1}, Y^{k_0,j+1}, F^{k_0,j+1}, \widehat{Y}^{k_0,j}).
\end{align*}
 By (\ref{update_W}-\ref{update_Y_ba}), we deduce that
\begin{align*}
    &\widetilde{L}^1_{k_0,j}+\frac{C_1^{k_0,j}}{2}\Vert W^{k_0,j+1}-W^{k_0,j}\Vert_F^2
    \leq L_{k_0}(T^{k_0,j});\\
    &\widetilde{L}^2_{k_0,j}+\frac{C_2^{k_0,j}}{2}\Vert U^{k_0,j+1}-U^{k_0,j}\Vert_F^2
    \leq \widetilde{L}^1_{k_0,j};\\
    &\widetilde{L}^3_{k_0,j}+\frac{C_3^{k_0,j}}{2}\Vert V^{k_0,j+1}-V^{k_0,j}\Vert_F^2
    \leq \widetilde{L}^2_{k_0,j};\\
    &\widetilde{L}^4_{k_0,j}+\frac{C_4^{k_0,j}}{2}\Vert Y^{k_0,j+1}-Y^{k_0,j}\Vert_F^2
    \leq \widetilde{L}^3_{k_0,j};\\
    &\widetilde{L}^5_{k_0,j}+\frac{C_5^{k_0,j}}{2}\Vert F^{k_0,j+1}-F^{k_0,j}\Vert_F^2
    \leq \widetilde{L}^4_{k_0,j};\\
    &L_{k_0}(T^{k_0,j+1})+\frac{C_6^{k_0,j}}{2}\Vert \widehat{Y}^{k_0,j+1}-\widehat{Y}^{k_0,j}\Vert_F^2
    \leq \widetilde{L}^5_{k_0,j}.
\end{align*}
Summing up these inequalities, we have
\begin{equation*}
    L_{k_0}(T^{k_0,j+1})+\frac{\underline{C}}{2}\Vert T^{k_0,j+1}-T^{k_0,j}\Vert_F^2\leq L_{k_0}(T^{k_0,j}),\ j\in\mathbb{N},
\end{equation*}
which implies that $\{L_{k_0}(T^{k_0,j})\}_{j\in\mathbb{N}}$ is a nonincreasing sequence, leading to a contradiction.

Based on a general result established in \citet[Theorem 6.2]{attouch2013convergence}, we know that for each $k\in\mathbb{N}$, the sequence $\{T^{k,j}\}_{j\in\mathbb{N}}$ has finite length, i.e., $\sum_{j=1}^{\infty}\Vert T^{k,j+1}-T^{k,j}\Vert_F<\infty$, and the sequence $\{T^{k,j}\}_{j\in\mathbb{N}}$ converges to a critical point of $L_k$. Since $\Theta^{k,j}$ is given by (\ref{theta}), we conclude that for each $k\in\mathbb{N}$, $\Vert\Theta^{k,j}\Vert_{\infty}\rightarrow 0$ as $j\rightarrow\infty$.
The proof is complete.
\end{proof}
\section{Convergence Analysis of Our Inexact ALM Method}

In this section, we discuss the convergence for our inexact ALM method given in Algorithm \ref{algo1}.

In the following, we rewrite (\ref{pro2}) using the notation of vectors. Let $\textbf{x}\in\mathbb{R}^{2dc+4nc}$ denote the column vector formed by concatenating the columns of
$W, U, V, Y, F, \widehat{Y}$, i.e.,
\begin{equation}\label{defx}
    \textbf{x}:=\text{Vec}([W\vert U\vert V\vert Y\vert F\vert \widehat{Y}]).
\end{equation}
Then, problem (\ref{pro2}) can be rewritten as follows:
\begin{equation}\label{vecpro}
\begin{aligned}
\min_{\textbf{x}\in\mathbb{R}^{2dc+4nc}}&f(x)\quad\text{s.t.}\ h_1(\textbf{x})=0\;\text{and}\;h_2(\textbf{x})=0,
\end{aligned}
\end{equation}
where $h_1(\textbf{x})\in\mathbb{R}^{3nc+dc}$ denotes $\text{Vec}([Y-X^\intercal W-U\vert V-W\vert Y-F\vert \widehat{Y}-Y])$, $h_2(\textbf{x})$ denotes the $\frac{c(c+1)}{2}\times 1$ vector obtained by vectorizing only the lower triangular part of the symmetric matrix $\widehat{Y}^\intercal \widehat{Y}-I_c$, and

\begin{equation*}
\begin{aligned}
f(\textbf{x}):=&\sum_{j=1}^{c} Y_j^\intercal LY_j+\gamma\Vert W_j\Vert_2^2+\delta_{S'}(F_j)+\sum_{i=1}^{n}\alpha\Vert U_i\Vert_2+\sum_{i=1}^d\beta\Vert V_i\Vert_2.
\end{aligned}
\end{equation*}

In this case, $Y_j, W_j, F_j$ are the column vectors of $Y, W$ and $F$, respectively; $U_i$ and $V_i$ are the row vectors of $U$ and $V$, respectively; $S'=\{F_j\;\vert\; \textbf{0}\leq F_j\leq \textbf{1}\}$. Let $\Lambda:=\text{Vec}([\lambda_1\vert \lambda_2\vert \lambda_3\vert \lambda_4])$. Then, the corresponding augmented Lagrangian function of (\ref{vecpro}) is
\begin{equation*}
    L(\textbf{x},\Lambda;\rho):= f(\textbf{x})+\sum_{i=1}^{m_1}[\Lambda]_i[h_1(\textbf{x})]_i+\frac{\rho}{2}\sum_{i=1}^{m_1}[h_1(\textbf{x})]_i^2,
\end{equation*}
where $\textbf{x}\in\Gamma$, $m_1:=3nc+dc$, $m_2:=\frac{c(c+1)}{2}$ and
\begin{equation}
    \Gamma:=\{\textbf{x}\;\vert\;h_2(\textbf{x})=0\}.
\end{equation}
Therefore, $(W^*,U^*,V^*,Y^*,F^*,\widehat{Y}^*)$ is a $\text{KKT}$ point for optimization problem (\ref{pro2}) if and only if the vector $\textbf{x}$ defined by (\ref{defx}) is a $\text{KKT}$ point for optimization problem (\ref{vecpro}), i.e., there exist $\theta^*\in\partial f(\textbf{x}^*)$, $\Lambda^*\in\mathbb{R}^{m_1}$, $\eta^*\in\mathbb{R}^{m_2}$ such that the following system is fulfilled
\begin{equation}
\left\{
\begin{array}{l}
\theta^*+\sum_{i=1}^{m_1}[\Lambda^*]_i\nabla[h_1(\textbf{x}^*)]_i+\sum_{i=1}^{m_2}[\eta^*]_i\nabla[h_2(\textbf{x}^*)]_i=0,\\
h_1(\textbf{x}^*)=0,\\
h_2(\textbf{x}^*)=0.\\
\end{array}
\right.
\end{equation}

Suppose that $\{T^k\}_{k\in\mathbb{N}}$ is a sequence generated by Algorithm \ref{algo1}. We will show first that the sequence $\{T^k\}_{k\in\mathbb{N}}$ is bounded. Then, there exists at least one convergent subsequence of $\{T^k\}_{k\in\mathbb{N}}$. We will next show that it converges to a $\text{KKT}$ point of the optimization problem (\ref{vecpro}). Thus, we have the following main convergence result for Algorithm \ref{algo1}.
\begin{theorem}\label{congver}
Suppose that the parameters $r> 1$ and $\rho^1> 0$ in Algorithm \ref{algo1}. Let $\{T^k\}_{k\in\mathbb{N}}$ be the sequence generated by Algorithm \ref{algo1}. Then, the limit point set of $\{T^k\}_{k\in\mathbb{N}}$ is nonempty, and every limit point is a $\text{KKT}$ point of the original problem (\ref{pro2}).
\end{theorem}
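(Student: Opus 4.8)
The plan is to prove the two assertions separately: (i) $\{T^k\}_{k\in\mathbb{N}}$ is bounded, so its limit-point set is nonempty; and (ii) every limit point, equipped with suitable limiting multipliers, satisfies the KKT system of the vectorized problem (\ref{vecpro}), which by the equivalence recorded above coincides with being a KKT point of (\ref{pro2}). The whole argument rests on the six block-wise approximate optimality relations hidden in $\Theta^k\in\partial L(\cdots,\overline{\lambda}^k;\rho^k)$ with $\Vert\Theta^k\Vert_\infty\le\epsilon_k$. Writing $\lambda_i^{k+1}=\overline{\lambda}_i^k+\rho^k R_i^k$ for the multiplier estimates of Step 2, these relations read $2\gamma W^k-X\lambda_1^{k+1}-\lambda_2^{k+1}=\Theta_1^k$; $\xi^k-\lambda_1^{k+1}=\Theta_2^k$ with $\xi^k\in\alpha\,\partial\Vert U^k\Vert_{2,1}$; $\zeta^k+\lambda_2^{k+1}=\Theta_3^k$ with $\zeta^k\in\beta\,\partial\Vert V^k\Vert_{2,1}$; $2LY^k+\lambda_1^{k+1}+\lambda_3^{k+1}-\lambda_4^{k+1}=\Theta_4^k$; $\lambda_3^{k+1}\in N_{\mathbb{S}_2}(F^k)-\Theta_5^k$; and $-\lambda_4^{k+1}\in N_{\mathbb{S}_1}(\widehat{Y}^k)-\Theta_6^k$.

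\textbf{Step 1 (boundedness).} The blocks $F^k\in\mathbb{S}_2$ and $\widehat{Y}^k\in\mathbb{S}_1$ lie in compact sets, hence are bounded. Since every subgradient of $\Vert\cdot\Vert_{2,1}$ has rows of Euclidean norm at most one, $\xi^k,\zeta^k$ are bounded, so the $U$- and $V$-relations bound $\lambda_1^{k+1}$ and $\lambda_2^{k+1}$. Because $\gamma>0$, the $W$-relation then bounds $W^k$; and as $\overline{\lambda}^k$ lies in the fixed box $\Omega$ and $\rho^k\ge\rho^1>0$, the identities $R_1^k=(\lambda_1^{k+1}-\overline{\lambda}_1^k)/\rho^k$ and $R_2^k=(\lambda_2^{k+1}-\overline{\lambda}_2^k)/\rho^k$ bound $R_1^k,R_2^k$, whence $V^k=W^k+R_2^k$ is bounded. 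The one nontrivial block is $Y$: substituting $\lambda_3^{k+1}-\lambda_4^{k+1}=\overline{\lambda}_3^k-\overline{\lambda}_4^k+\rho^k(R_3^k-R_4^k)$ and $R_3^k-R_4^k=2Y^k-F^k-\widehat{Y}^k$ into the $Y$-relation gives $(2L+2\rho^k I)Y^k=\Theta_4^k-\lambda_1^{k+1}-\overline{\lambda}_3^k+\overline{\lambda}_4^k+\rho^k(F^k+\widehat{Y}^k)$. Since $L\succeq0$ yields $\Vert(2L+2\rho^k I)^{-1}\Vert\le 1/(2\rho^k)$ and $\Vert(2L+2\rho^k I)^{-1}\rho^k\Vert\le 1/2$, the right-hand side produces a uniform bound on $Y^k$; finally $U^k=Y^k-X^\intercal W^k-R_1^k$ is bounded. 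Thus $\{T^k\}$ is bounded and admits a subsequence $T^k\to T^*$, $k\in\mathcal{K}$.

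\textbf{Steps 2--3 (feasibility and KKT).} The constraints $(\widehat{Y}^k)^\intercal\widehat{Y}^k=I_c$ and $\textbf{O}\le F^k\le\textbf{E}$ hold for all $k$ by (\ref{constrain}), so by closedness $(\widehat{Y}^*)^\intercal\widehat{Y}^*=I_c$ and $\textbf{O}\le F^*\le\textbf{E}$. To obtain $h_1(\textbf{x}^*)=0$ I would split on $\{\rho^k\}$: if it is bounded it is eventually constant, so the test in (\ref{rho}) forces $\Vert R_i^k\Vert_\infty\le\tau\Vert R_i^{k-1}\Vert_\infty$ for all large $k$ and all $i$, and $\tau\in[0,1)$ drives every residual to zero geometrically; if $\rho^k\to\infty$, boundedness of $\lambda_1^{k+1},\lambda_2^{k+1}$ gives $R_1^k,R_2^k\to0$ and the $Y$-identity gives $R_3^k-R_4^k\to0$. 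Granting feasibility, I would pass to the limit along a further subsequence on which $\lambda_i^{k+1}\to\Lambda_i^*$ ($\lambda_1,\lambda_2$ directly; $\lambda_3,\lambda_4$ via $\lambda_i^{k+1}=\overline{\lambda}_i^k+\rho^k R_i^k$ or the $F$- and $\widehat{Y}$-relations). Using $\epsilon_k\to0$ and the outer semicontinuity of $\partial\Vert\cdot\Vert_{2,1}$, $N_{\mathbb{S}_2}$ and $N_{\mathbb{S}_1}$ (the last written as $\{\widehat{Y}^*S:S=S^\intercal\}$ by Remark~\ref{rma2}), the six relations reproduce exactly the stationarity block-equations of (\ref{vecpro}) with $\theta^*\in\partial f(\textbf{x}^*)$ and the orthogonality multiplier $\eta^*$ furnished by $-\Lambda_4^*\in N_{\mathbb{S}_1}(\widehat{Y}^*)$; combined with $h_1(\textbf{x}^*)=0$ and $h_2(\textbf{x}^*)=0$ this is the KKT system.

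\textbf{Main obstacle.} I expect the crux to be driving all four residuals to zero at a limit point. Feasibility of the blocks $U=Y-X^\intercal W$ and $V=W$ is essentially free, because the $\ell_{2,1}$ terms keep $\lambda_1^{k+1},\lambda_2^{k+1}$ bounded, and the Frobenius regularization ($\gamma>0$) together with $L\succeq0$ is what tames the $W$- and $Y$-blocks. For the couplings $Y=F$ and $\widehat{Y}=Y$, however, the dual estimates $\lambda_3^{k+1},\lambda_4^{k+1}$ need not be bounded a priori (their defining relations only place them near the unbounded normal cones $N_{\mathbb{S}_2}$, $N_{\mathbb{S}_1}$), so feasibility there leans entirely on the penalty-update rule (\ref{rho}). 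Certifying that $R_3^k+R_4^k=\widehat{Y}^k-F^k\to0$ and that $\lambda_3^{k+1},\lambda_4^{k+1}$ converge in the regime $\rho^k\to\infty$ is the technical heart of the proof, whereas the bounded-$\rho^k$ regime is routine.
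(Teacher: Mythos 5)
Your Step 1 is correct and is essentially the paper's Lemma \ref{sequbound} reorganized: the block relations you write in terms of $\lambda_i^{k+1}$ are exactly the paper's system (\ref{sub}), and the order in which you bound $W^k, V^k, Y^k, U^k$ (versus the paper's $Y^k$ first) is immaterial since the $W$-relation does not involve $Y$. The bounded-$\rho^k$ half of your feasibility argument also matches the paper's Part I. The problem is the part you yourself flag as ``the technical heart'': in the regime $\rho^k\to\infty$ you obtain only $R_1^k\to 0$, $R_2^k\to 0$ and $R_3^k-R_4^k\to 0$, and you have no mechanism to conclude $R_3^k\to 0$ and $R_4^k\to 0$ separately, nor to show that $\lambda_3^{k+1}$ and $\lambda_4^{k+1}$ are bounded; your final KKT passage silently assumes a subsequence along which they converge, and your identification of the orthogonality multiplier via $-\Lambda_4^*\in N_{\mathbb{S}_1}(\widehat{Y}^*)$ rests on exactly this unproved convergence. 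So as it stands the proposal does not prove the theorem: both feasibility of $Y=F$ and $\widehat{Y}=Y$ at the limit point and the existence of limiting multipliers remain open in the unbounded case.

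The idea you are missing is the paper's Lemma \ref{indepen}, a constraint-qualification (LICQ) statement: at every point of $\Gamma=\{\textbf{x}\,\vert\,h_2(\textbf{x})=0\}$ the gradients $\{\nabla[h_1(\textbf{x})]_i\}_{i=1}^{m_1}\cup\{\nabla[h_2(\textbf{x})]_i\}_{i=1}^{m_2}$ of \emph{all} equality constraints of the vectorized problem (\ref{vecpro}) are linearly independent; this follows from the block structure of $\nabla h_1$ together with the fact that the columns of $G(\textbf{x})$, built from the mutually orthogonal columns of $\widehat{Y}$, are orthogonal. Writing $H(\textbf{x})^\intercal=[\nabla h_1(\textbf{x})\ \nabla h_2(\textbf{x})]$, the approximate stationarity condition becomes $H(\textbf{x}^k)^\intercal\pi^k=\delta^k-\theta^k$ with $\Vert\delta^k\Vert_\infty\leq\epsilon^k$, and since $H(\textbf{x}^k)H(\textbf{x}^k)^\intercal\to H(\textbf{x}^*)H(\textbf{x}^*)^\intercal\succ 0$ one can \emph{solve} for the full multiplier vector: $\pi^k=[H(\textbf{x}^k)H(\textbf{x}^k)^\intercal]^{-1}H(\textbf{x}^k)(\delta^k-\theta^k)$. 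Dividing the relation by $\rho^k$ first gives $\eta^k=[H(\textbf{x}^k)H(\textbf{x}^k)^\intercal]^{-1}H(\textbf{x}^k)\frac{\delta^k-\theta^k}{\rho^k}\to 0$, and since the $h_1$-components of $\eta^k$ are $[\overline{\Lambda}^k/\rho^k]_i+[h_1(\textbf{x}^k)]_i$ with $\overline{\Lambda}^k$ confined to the box $\Omega$, this forces \emph{every} component of $h_1(\textbf{x}^*)$ to vanish, including $Y-F$ and $\widehat{Y}-Y$; the same formula without the division shows $\pi^k$ converges, which supplies the limiting multipliers $(\Lambda^*,\upsilon^*)$ for the KKT system. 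In other words, the dual directions you regarded as uncontrollable (those lying in the unbounded normal cones $N_{\mathbb{S}_1}$, $N_{\mathbb{S}_2}$) are controlled uniformly by the full-rank property of $H$, and this single lemma closes both of the gaps you identified.
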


 To show Theorem \ref{congver}, we need the following two lemmas.
\begin{lemma}\label{sequbound}
Let $\{T^k\}_{k\in\mathbb{N}}$ be the sequence generated by Algorithm \ref{algo1}. Suppose that the parameters $r, \rho^1$ in Algorithm \ref{algo1} are chosen so that $r> 1$ and $\rho^1>0$. Then, $\{T^k\}_{k\in\mathbb{N}}$ is bounded and thus contains at least one convergent sequence.
\end{lemma}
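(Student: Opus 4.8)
The plan is to show that each of the six blocks of $T^k=(W^k,U^k,V^k,Y^k,F^k,\widehat Y^k)$ is bounded uniformly in $k$, and then to invoke the Bolzano--Weierstrass theorem. Two blocks come for free: the feasibility conditions (\ref{constrain}) force $\textbf{O}\le F^k\le\textbf{E}$ and $(\widehat Y^k)^{\intercal}\widehat Y^k=I_c$, so $\|F^k\|_F\le\sqrt{nc}$ and $\|\widehat Y^k\|_F=\sqrt c$. Moreover, Step~2 projects every multiplier onto the fixed box $\Omega$, so $\{\overline\lambda_N^k\}_k$ is bounded for $N=1,2,3,4$; the rule (\ref{rho}) makes $\{\rho^k\}$ nondecreasing with $\rho^k\ge\rho^1>0$; and $\epsilon_k\to0$ makes $\{\|\Theta^k\|_\infty\}_k$ bounded. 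The remaining task is to control $W^k,U^k,V^k,Y^k$.

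The engine of the argument is to feed the multiplier update $\lambda_N^{k+1}=\overline\lambda_N^k+\rho^kR_N^k$ (Step~2, with residuals $R_N^k$ as defined after (\ref{rho})) into the blockwise stationarity relations contained in $\Theta^k\in\partial L_k(T^k)$. The excerpt has already established, via the split (\ref{def_Lk}) and Proposition~2.1 of \citet{attouch2010proximal}, that each component obeys $\Theta_N^k\in\partial f_N(\cdot)+\nabla_N g_k(T^k)$. Computing the partial gradients of $g_k$ and substituting $\rho^kR_N^k=\lambda_N^{k+1}-\overline\lambda_N^k$ collapses the penalty and linear terms into the updated multipliers, yielding the clean identities
\[
\Theta_1^k=2\gamma W^k-X\lambda_1^{k+1}-\lambda_2^{k+1},\qquad \Theta_2^k=\xi^k-\lambda_1^{k+1},\qquad \Theta_3^k=\zeta^k+\lambda_2^{k+1},
\]
\[
\Theta_4^k=2LY^k+\lambda_1^{k+1}+\lambda_3^{k+1}-\lambda_4^{k+1},
\]
where $\xi^k\in\alpha\,\partial\|U^k\|_{2,1}$ and $\zeta^k\in\beta\,\partial\|V^k\|_{2,1}$.

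The crux, and the step I expect to be the main obstacle, is bounding the \emph{unprojected} multipliers $\lambda_1^{k+1}$ and $\lambda_2^{k+1}$, which a priori could diverge since $\rho^k$ may tend to infinity. The key observation is that the second and third identities identify them, up to the vanishing errors $\Theta_2^k,\Theta_3^k$, with elements of the scaled subdifferentials $\alpha\,\partial\|\cdot\|_{2,1}$ and $\beta\,\partial\|\cdot\|_{2,1}$, every row of which has Euclidean norm at most $\alpha$ (resp.\ $\beta$). Hence $\|\lambda_1^{k+1}\|_F\le\alpha\sqrt n+\|\Theta_2^k\|_F$ and $\|\lambda_2^{k+1}\|_F\le\beta\sqrt d+\|\Theta_3^k\|_F$ are uniformly bounded. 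I note that the normal-cone blocks for $F$ and $\widehat Y$ give no analogous bound on $\lambda_3^{k+1},\lambda_4^{k+1}$, which is precisely why those variables must be treated separately. With $\lambda_1^{k+1},\lambda_2^{k+1}$ bounded, the first identity and $\gamma>0$ give $W^k=\tfrac1{2\gamma}\bigl(\Theta_1^k+X\lambda_1^{k+1}+\lambda_2^{k+1}\bigr)$, so $\{W^k\}_k$ is bounded.

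It remains to bound $Y^k$, after which $U^k,V^k$ follow. Substituting the purely algebraic relation $\lambda_3^{k+1}-\lambda_4^{k+1}=\overline\lambda_3^k-\overline\lambda_4^k+\rho^k(2Y^k-F^k-\widehat Y^k)$ into the fourth identity and using the symmetry of the normalized Laplacian gives $(2L+2\rho^kI)Y^k=\Theta_4^k-\lambda_1^{k+1}-\overline\lambda_3^k+\overline\lambda_4^k+\rho^k(F^k+\widehat Y^k)$. Since $L\succeq0$, every eigenvalue of $2L+2\rho^kI$ is at least $2\rho^k$, so $\|(2L+2\rho^kI)^{-1}Z\|_F\le\tfrac1{2\rho^k}\|Z\|_F$ for any $Z$ and in particular $\|(2L+2\rho^kI)^{-1}\rho^kZ\|_F\le\tfrac12\|Z\|_F$; combined with $\rho^k\ge\rho^1$ and the boundedness of $\lambda_1^{k+1},\overline\lambda_3^k,\overline\lambda_4^k,F^k,\widehat Y^k$, this forces $\{Y^k\}_k$ to be bounded. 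Finally, the residual relations $R_2^k=(\lambda_2^{k+1}-\overline\lambda_2^k)/\rho^k$ and $R_1^k=(\lambda_1^{k+1}-\overline\lambda_1^k)/\rho^k$ with $\rho^k\ge\rho^1>0$ make $R_1^k,R_2^k$ bounded, whence $V^k=W^k+R_2^k$ and $U^k=Y^k-X^{\intercal}W^k-R_1^k$ are bounded. All six blocks being bounded, $\{T^k\}_k$ is bounded and therefore admits a convergent subsequence.
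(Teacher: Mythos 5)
Your proof is correct and follows essentially the same route as the paper's: both extract the blockwise approximate stationarity relations from (\ref{Critical}), use the boundedness of the $l_{2,1}$-subgradients and of the projected multipliers to control the terms $\overline{\lambda}_1^k+\rho^k R_1^k$ and $\overline{\lambda}_2^k+\rho^k R_2^k$, combine the $U$- and $Y$-block relations so that the potentially unbounded $\rho^k$ appears only through $(2L+2\rho^k I)^{-1}\rho^k$, and finally recover $U^k,V^k$ by dividing the bounded residuals by $\rho^k\geq\rho^1$. The only differences are cosmetic: you phrase the identities via the updated multipliers $\lambda_N^{k+1}$ rather than $\overline{\lambda}_N^k+\rho^k R_N^k$, bound $W^k$ before $Y^k$ instead of after, and use an operator-norm bound in place of the paper's explicit eigendecomposition of $L$.
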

\begin{proof}
It follows from (\ref{constrain}) that the sequence $\{F^k\}_{k\in\mathbb{N}}$ and $\{\widehat{Y}^k\}_{k\in\mathbb{N}}$ are bounded. The first four partial subdifferentials of $L$ in (\ref{Critical}) guarantee the following: there exist $\xi^k\in\partial\alpha\Vert U\Vert_{2,1}$, $\zeta^k\in\partial\beta\Vert V\Vert_{2,1}$ and $\aleph^k=(\aleph_1^k,\aleph_2^k,\aleph_3^k,\aleph_4^k)\in\mathbb{R}^{d\times c}\times\mathbb{R}^{n\times c}\times\mathbb{R}^{d\times c}\times\mathbb{R}^{n\times c}$ such that
\begin{equation}\label{sub}
    \left\{
    \begin{aligned}
    \aleph_1^k=&2\gamma W^k-X\overline{\lambda}_1^k-\overline{\lambda}_2^k-\rho^k X(Y^k-X^\intercal W^k-U^k)-\rho^k(V^k-W^k),\\
    \aleph_2^k=&\xi^k-\overline{\lambda}_1^k-\rho^k(Y^k-X^\intercal W^k-U^k),\\
    \aleph_3^k=&\zeta^k+\overline{\lambda}_2^k+\rho^k(V^k-W^k),\\
    \aleph_4^k=&2LY^k+\overline{\lambda}_1^k+\overline{\lambda}_3^k-\overline{\lambda}_4^k+\rho^k(Y^k-X^\intercal W^k-U^k)
               +\rho^k(Y^k-F^k)-\rho^k(\widehat{Y}^k-Y^k),
    \end{aligned}
    \right.
\end{equation}
where $\Vert\aleph^k\Vert_{\infty}\leq\epsilon^k$. By adding $\aleph_2^k$ and $\aleph_4^k$, we obtain that
\begin{equation*}
 \begin{aligned}
\aleph_2^k+\aleph_4^k=\xi^k+(2L+2\rho^k I)Y^k+\overline{\lambda}_3^k-\overline{\lambda}_4^k-\rho^k F^k-\rho^k\widehat{Y}^k.
\end{aligned}
\end{equation*}
This implies
\begin{equation}\label{Yk}
 Y^k=[2(L+\rho^k I)]^{-1}(\aleph_2^k+\aleph_4^k-\xi^k-\overline{\lambda}_3^k+\overline{\lambda}_4^k+\rho^k F^k+\rho^k\widehat{Y}^k).
\end{equation}
Let $L=D\text{diag}(\sigma_1,\cdots,\sigma_n)D^\intercal$ denotes the $\text{SVD}$ decomposition of the symmetric and positive semi-definite matrix $L$. Hence (\ref{Yk}) yields
\begin{equation}\label{finYk}
    \begin{aligned}
    Y^k=
    &D\text{diag}\left(\frac{1}{2(\sigma_1+\rho^k)},\frac{1}{2(\sigma_2+\rho^k)},\cdots,\frac{1}{2(\sigma_n+\rho^k)}\right)D^\intercal(\aleph_2^k+\aleph_4^k-\xi^k-\overline{\lambda}_3^k+\overline{\lambda}_4^k)\\&
    +D\text{diag}\left(\frac{\rho^k}{2(\sigma_1+\rho^k)},\frac{\rho^k}{2(\sigma_2+\rho^k)},\cdots,\frac{\rho^k}{2(\sigma_n+\rho^k)}\right)D^\intercal (F^k+\widehat{Y}^k).
    \end{aligned}
\end{equation}
Using the fact $\{\rho^k\}_{k\in\mathbb{N}}$ is a nondecreasing sequence and $2(L+\rho^1 I)\succ0$, for $k\in\mathbb{N}$, we have $2(L+\rho^k I)\succ0$, which derives $2(\sigma_i+\rho^k)> 0$, $i=1,2,\cdots,n$. Then, we can show that for each $k\in\mathbb{N}$
\begin{equation}\label{range}
\left\{
\begin{array}{l}
      0<\frac{1}{2(\sigma_i+\rho^k)}\leq\frac{1}{2(\sigma_i+\rho^1)}<+\infty,\quad i=1,2,\cdots,n;\\
      0<\frac{\rho^k}{2(\sigma_i+\rho^k)}\leq\frac{1}{2},\quad i=1,2,\cdots,n.
\end{array}
\right.
 \end{equation}
Note that $\{\xi^k\}_{k\in\mathbb{N}}$, $\{\aleph_2^k\}_{k\in\mathbb{N}}$, $\{\aleph_4^k\}_{k\in\mathbb{N}}$, $\{\overline{\lambda}_3^k\}_{k\in\mathbb{N}}$ and $\{\overline{\lambda}_4^k\}_{k\in\mathbb{N}}$ are bounded. It follows from (\ref{finYk}) and  (\ref{range}) that the sequence $\{Y^k\}_{k\in\mathbb{N}}$ is bounded.

Likewise, according to the expression of $\aleph_3^k$ and $\aleph_2^k$ in (\ref{sub}), we conclude that $\{\rho^k(V^k-W^k)\}_{k\in\mathbb{N}}$ and $\{\rho^k(Y^k-X^\intercal W^k-U^k)\}_{k\in\mathbb{N}}$ are bounded. Then, from the expression of $\aleph_1^k$ in (\ref{sub}), we must have that the sequence $\{W^k\}_{k\in\mathbb{N}}$ is bounded. Using the fact that $\rho^k\geq\rho^1$ again, we obtain that $\{V^k-W^k\}_{k\in\mathbb{N}}$ and $\{Y^k-X^\intercal W^k-U^k\}_{k\in\mathbb{N}}$ are bounded. Therefore, the sequence $\{V^k\}_{k\in\mathbb{N}}$ and $\{U^k\}_{k\in\mathbb{N}}$ are bounded. In a conclusion, the sequence $\{(W^k,U^k,V^k,Y^k,F^k,\widehat{Y}^k)\}_{k\in\mathbb{N}}$ is bounded. The proof is complete.
\end{proof}

\begin{lemma}\label{indepen}
Suppose that $\bar{\textbf{x}}\in\Gamma$. Then $\{\nabla[h_1(\bar{\textbf{x}})]_i\}_{i=1}^{m_1}\cup\{\nabla[h_2(\bar{\textbf{x}})]_i\}_{i=1}^{m_2}$ are linearly independent, where $h_1$ and $h_2$ are defined as in (\ref{vecpro}).
\end{lemma}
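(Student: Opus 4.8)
The plan is to exploit the fact that $h_1$ is affine in $\textbf{x}$ (so its gradients are constant vectors, independent of $\bar{\textbf{x}}$), while $h_2$ depends on $\textbf{x}$ only through $\widehat{Y}$. Writing the variable as $(W,U,V,Y,F,\widehat{Y})$, the key structural observation is that each of the four blocks of $h_1=\text{Vec}([Y-X^\intercal W-U\,|\,V-W\,|\,Y-F\,|\,\widehat{Y}-Y])$ contains a private variable: the block $Y-X^\intercal W-U$ is the only constraint containing $U$, the block $V-W$ is the only one containing $V$, the block $Y-F$ is the only one containing $F$, and $h_2$ is the only constraint containing $\widehat{Y}$ quadratically.

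First I would suppose a vanishing linear combination $\sum_i\mu_i\nabla[h_1(\bar{\textbf{x}})]_i+\sum_j\nu_j\nabla[h_2(\bar{\textbf{x}})]_j=0$ and peel off multipliers by projecting onto single coordinate blocks. Projecting onto the $U$-coordinates, only the first block of $h_1$ contributes, with coefficient matrix $-I$, forcing all of its multipliers to vanish; projecting onto the $V$- and $F$-coordinates annihilates the second and third blocks in the same way. Once these three blocks are removed, the $Y$-coordinates receive a contribution only from the remaining block $\widehat{Y}-Y$ (again through $-I$), so its multipliers vanish as well. This reduces the statement to the linear independence of the gradients of $h_2$ alone.

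The main step is therefore the constraint qualification for the orthogonality constraint $\widehat{Y}^\intercal\widehat{Y}=I_c$ at a feasible $\bar{\textbf{x}}\in\Gamma$. I would compute that the gradient of the $(a,b)$ entry of $\widehat{Y}^\intercal\widehat{Y}$, viewed as a matrix in $\widehat{Y}$-space, equals $\widehat{Y}(\e_a\e_b^\intercal+\e_b\e_a^\intercal)$, where $\e_a,\e_b$ are the standard basis vectors of $\mathbb{R}^c$. A vanishing combination $\sum_{a\ge b}\nu_{a,b}\,\widehat{Y}(\e_a\e_b^\intercal+\e_b\e_a^\intercal)=0$ can be rewritten as $\widehat{Y}S=0$ with $S$ a symmetric $c\times c$ matrix, and since the matrices $\e_a\e_b^\intercal+\e_b\e_a^\intercal$ for $a\ge b$ form a basis of the space of symmetric matrices, the vanishing of all $\nu_{a,b}$ is equivalent to $S=0$.

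The crucial use of the hypothesis $\bar{\textbf{x}}\in\Gamma$ enters exactly here: because $\widehat{Y}^\intercal\widehat{Y}=I_c$, the matrix $\widehat{Y}$ has full column rank, so left-multiplying $\widehat{Y}S=0$ by $\widehat{Y}^\intercal$ yields $S=\widehat{Y}^\intercal\widehat{Y}S=0$ immediately, whence every $\nu_{a,b}=0$. I expect this final rank argument to be the only genuinely nontrivial point; everything preceding it is the routine bookkeeping of which variable sits in which constraint block. This full-rank conclusion is moreover precisely the hypothesis of Remark \ref{rma2}, which is what is needed later to describe the normal cone to $\Gamma$ explicitly.
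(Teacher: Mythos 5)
Your proof is correct and follows essentially the same route as the paper: both arguments exploit the identity blocks that each "private" variable ($U$, $V$, $F$, then $Y$) contributes to $\nabla h_1$, and then use feasibility $\widehat{Y}^\intercal\widehat{Y}=I_c$ to handle the gradients of $h_2$. The only cosmetic difference is that where you argue via $\widehat{Y}S=0$ with $S$ symmetric and full column rank of $\widehat{Y}$, the paper writes out the Jacobian blocks explicitly and observes that the columns of the $\widehat{Y}$-block $G(\textbf{x})$ of $\nabla h_2$ are mutually orthogonal; your multiplier-peeling step is in fact a more explicit rendering of what the paper compresses into "it follows from the structure."
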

\begin{proof}
For convenience, we define the block diagonal matrix $A\in\mathbb{R}^{dc\times nc}$, $B\in\mathbb{R}^{dc\times dc}$ and $C\in\mathbb{R}^{nc\times nc}$ as follows:
\begin{equation*}
  A=
    \begin{bmatrix}
    -X&  &      & \\
      &-X&      & \\
      &  &\ddots&  \\
      &  &      &-X \\
    \end{bmatrix},\quad
 B=
 \begin{bmatrix}
 I_d&  &      & \\
  &I_d&      & \\
   &  &\ddots&  \\
    &  &      &I_d\\
 \end{bmatrix},\quad
 C=
 \begin{bmatrix}
 I_n&          &      & \\
 &I_n&      & \\
  &          &\ddots&  \\
   &          &      &I_n\\
 \end{bmatrix}.
\end{equation*}
By the structure of $\textbf{x}$ defined in (\ref{defx}), we have
 \begin{equation*}
    \nabla h_1(\textbf{x})=
    \left[
    \begin{array}{c|c|c|c}
    A&-B&\textbf{O}_{dc\times nc}&\textbf{O}_{dc\times nc}\\\hline
    -C&\textbf{O}_{nc\times dc}&\textbf{O}_{nc\times nc}&\textbf{O}_{nc\times nc}\\\hline
    \textbf{O}_{dc\times nc}&B&\textbf{O}_{dc\times nc}&\textbf{O}_{dc\times nc}\\\hline
    C&\textbf{O}_{nc\times dc}&C&-C\\\hline
    \textbf{O}_{nc\times nc}&\textbf{O}_{nc\times dc}&-C&\textbf{O}_{nc\times nc}\\\hline
    \textbf{O}_{nc\times nc}&\textbf{O}_{nc\times dc}&\textbf{O}_{nc\times nc}&C\\
    \end{array}
    \right]
\quad \text{and} \quad
\nabla h_2(\textbf{x})=
\left[
    \begin{array}{c}
  \textbf{O}_{dc\times m_2}\\\hline
  \textbf{O}_{nc\times m_2}\\\hline
  \textbf{O}_{dc\times m_2}\\\hline
  \textbf{O}_{nc\times m_2}\\\hline
  \textbf{O}_{nc\times m_2}\\\hline
  G(\textbf{x})
    \end{array}
    \right],
\end{equation*}
where $G(\textbf{x})$ is given in (\ref{longmatrix})
\begin{figure*}
\centering
\begin{scriptsize}
\begin{equation}\label{longmatrix}
 G(\textbf{x})=
    \left[
   \begin{array}{ccccccccccccc}
    2\widehat{Y}_1&\widehat{Y}_2&\widehat{Y}_3&\cdots&\widehat{Y}_c&\textbf{O}_{n\times 1}&\textbf{O}_{n\times 1}&\cdots&\textbf{O}_{n\times 1}&  &\textbf{O}_{n\times 1}&\textbf{O}_{n\times 1}&\textbf{O}_{n\times 1}\\
    \textbf{O}_{n\times 1}&\widehat{Y}_1&\textbf{O}_{n\times 1}&\cdots&\textbf{O}_{n\times 1}&2\widehat{Y}_2&\widehat{Y}_3&\cdots&\widehat{Y}_c&  &\vdots&\vdots&\vdots\\
    \textbf{O}_{n\times 1}&\textbf{O}_{n\times 1}&\widehat{Y}_1&\cdots&\textbf{O}_{n\times 1}&\textbf{O}_{n\times 1}&\widehat{Y}_2&\cdots&\textbf{O}_{n\times 1}&\cdots&\textbf{O}_{n\times 1}&\textbf{O}_{n\times 1}&\vdots\\
    \vdots&\vdots&\ddots&\ddots&\textbf{O}_{n\times 1}&\vdots&\ddots&\ddots&\vdots& &2\widehat{Y}_{c-1}&\widehat{Y}_c&\textbf{O}_{n\times 1}\\
    \textbf{O}_{n\times 1}&\textbf{O}_{n\times 1}&\cdots&\textbf{O}_{n\times 1}&\widehat{Y}_1&\textbf{O}_{n\times 1}&\cdots&\textbf{O}_{n\times 1}&\widehat{Y}_2& &\textbf{O}_{n\times 1}&\widehat{Y}_{c-1}&2\widehat{Y}_c
    \end{array}
    \right]
\end{equation}
\end{scriptsize}
\end{figure*}
and $\{\widehat{Y}_i\}_{i=1}^{c}$ are the cloumn vectors of $\widehat{Y}$.

As $\textbf{x} \in \Gamma$, we must have that the column vectors $\{\widehat{Y}_i\}_{i=1}^{c}$ are orthogonal to each other,
and then the columns of $G(\textbf{x})$ are orthogonal to each other.
Note that the first $3nc+2dc$ rows of $\nabla h_2(\textbf{x})$ constitute a zero matrix.
Therefore, it follows from the structure of $\nabla h_1(\textbf{x})$ and $\nabla h_2(\textbf{x})$ that $\{\nabla[h_1(\bar{\textbf{x}})]_i\}_{i=1}^{m_1}\cup\{\nabla[h_2(\bar{\textbf{x}})]_i\}_{i=1}^{m_2}$ are linearly independent for any $\textbf{x}\in\Gamma$. The proof is complete.
\end{proof}
By Lemmas \ref{sequbound} and \ref{indepen}, we can show that any accumulation point $\textbf{x}^*$ of the corresponding sequence $\{\textbf{x}^k\}_{k\in\mathbb{N}}$ with respect to $\{T^k\}_{k\in\mathbb{N}}$ is a $\text{KKT}$ point of problem (\ref{vecpro}). As shown in Remark \ref{rma2}, the normal cone $\partial\delta_{\mathbb{S}_1}(T)=N_{\mathbb{S}_1}(T)$ in vector notation is
\begin{equation*}
    N_{\Gamma}(\bar{\textbf{x}})=\{\nabla h_2(\bar{\textbf{x}})\upsilon| \upsilon\in\mathbb{R}^{m_2}\}=\{\sum_{i=1}^{m_2}[\upsilon]_i\nabla[h_2(\bar{\textbf{x}})]_i|\upsilon\in\mathbb{R}^{m_2}\}.
\end{equation*}
According to the well-definedness of (\ref{Critical}), in view of the vector notation, we can obtain a solution $\textbf{x}^k$ such that there exist two vectors $\theta^k\in\partial f(\textbf{x}^k)$ and  $\upsilon^k$ to satisfy
 \begin{equation*}
     \Vert \theta^k+\sum_{i=1}^{m_1}([\overline{\Lambda}^k]_i+\rho^k[h_1(\textbf{x}^k)]_i)\nabla[h_1(\textbf{x}^k)]_i+\sum_{i=1}^{m_2}[\upsilon^k]_i\nabla[h_2(\textbf{x}^k)]_i\Vert_\infty\leq\epsilon^k
 \end{equation*}
 for each $k\in\mathbb{N}$. The following result is central to this paper.
 \begin{theorem}\label{centraltheo}
 Let $\{\textbf{x}^k\}_{k\in\mathbb{N}}$ be the iteration sequence generated by Algorithm \ref{algo1} and $\textbf{x}^*$ be its accumulation point, i.e., there exists a subsequence $\mathcal{K}\subseteq \mathbb{N}$ such that $\lim_{k\in\mathcal{K}}\textbf{x}^k=\textbf{x}^*$. Then $\textbf{x}^*$ is also a $\text{KKT}$ point of problem (\ref{vecpro}).
 \end{theorem}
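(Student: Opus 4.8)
The plan is to verify the three components of the KKT system at $\textbf{x}^*$ in turn, exploiting the approximate stationarity certificate attached to each inexact subproblem solve. Throughout, write $\Lambda_{\mathrm{eff}}^k := \overline{\Lambda}^k + \rho^k h_1(\textbf{x}^k)$ for the effective (unprojected) multiplier, so that the well-definedness of (\ref{Critical}) supplies, for each $k$, a subgradient $\theta^k \in \partial f(\textbf{x}^k)$ and a vector $\upsilon^k$ with
\begin{equation*}
\Big\Vert \theta^k + \sum_{i=1}^{m_1}[\Lambda_{\mathrm{eff}}^k]_i\nabla[h_1(\textbf{x}^k)]_i + \sum_{i=1}^{m_2}[\upsilon^k]_i\nabla[h_2(\textbf{x}^k)]_i\Big\Vert_\infty \leq \epsilon^k .
\end{equation*}
Feasibility of the orthogonality constraint is immediate: the requirement (\ref{constrain}) forces $\textbf{x}^k \in \Gamma$ for every $k$, i.e. $h_2(\textbf{x}^k)=0$, so continuity yields $h_2(\textbf{x}^*)=0$. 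It remains to establish boundedness of the multiplier sequences, feasibility $h_1(\textbf{x}^*)=0$, and the stationarity equation.

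The first and main step is to show that $\{(\Lambda^k_{\mathrm{eff}},\upsilon^k)\}_{k\in\mathcal{K}}$ is bounded. I would argue by contradiction: if $s_k := \Vert(\Lambda^k_{\mathrm{eff}},\upsilon^k)\Vert \to \infty$ along a subsequence, divide the displayed inclusion by $s_k$. The specific subgradient $\theta^k$ produced by the PAM subproblem is bounded on the bounded iterate sequence (its nonsmooth pieces from $\alpha\Vert\cdot\Vert_{2,1}$ and $\beta\Vert\cdot\Vert_{2,1}$ have subgradients bounded by $\alpha,\beta$; the pieces $2LY^k$ and $2\gamma W^k$ are bounded by Lemma \ref{sequbound}; and the box-normal-cone piece for $F$ selected by the projection step equals the negative of bounded quantities), while $\epsilon^k\to 0$; hence $\theta^k/s_k \to 0$ and $\epsilon^k/s_k \to 0$. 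Extracting a convergent subsequence of the normalized multipliers $(\Lambda^k_{\mathrm{eff}},\upsilon^k)/s_k$, whose limit is nonzero, and passing to the limit via continuity of $\nabla h_1,\nabla h_2$ together with $\textbf{x}^k\to\textbf{x}^*$, produces a nontrivial vanishing linear combination of $\{\nabla[h_1(\textbf{x}^*)]_i\}\cup\{\nabla[h_2(\textbf{x}^*)]_i\}$. Since $\textbf{x}^*\in\Gamma$, this contradicts the linear independence of Lemma \ref{indepen}. Therefore the effective multipliers stay bounded along $\mathcal{K}$.

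Feasibility $h_1(\textbf{x}^*)=0$ then follows cleanly. Because $\Lambda^k_{\mathrm{eff}}$ is bounded and $\overline{\Lambda}^k$ is bounded by construction (projection onto $\Omega$), the difference $\rho^k h_1(\textbf{x}^k) = \Lambda^k_{\mathrm{eff}} - \overline{\Lambda}^k$ is bounded. If $\rho^k\to\infty$ this forces $h_1(\textbf{x}^k)\to 0$; if instead $\{\rho^k\}$ stays bounded, then by the update rule (\ref{rho}) the test $\Vert R_i^k\Vert_\infty\leq\tau\Vert R_i^{k-1}\Vert_\infty$ must hold for all large $k$, and $\tau\in[0,1)$ again drives the residuals — hence $h_1(\textbf{x}^k)$ — to zero. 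In either case continuity gives $h_1(\textbf{x}^*)=0$.

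Finally, for the stationarity equation I would extract a further subsequence along which $\theta^k\to\theta^*$ and $(\Lambda^k_{\mathrm{eff}},\upsilon^k)\to(\Lambda^*,\eta^*)$. Since the iterates keep $F^k$ in the box and the remaining terms of $f$ are continuous, $f(\textbf{x}^k)\to f(\textbf{x}^*)$, so the closedness of $\partial f$ (Remark \ref{rma1}) yields $\theta^*\in\partial f(\textbf{x}^*)$. Letting $k\to\infty$ in the displayed inclusion, using $\epsilon^k\to 0$ and continuity of the constraint gradients, produces $\theta^* + \sum_i[\Lambda^*]_i\nabla[h_1(\textbf{x}^*)]_i + \sum_i[\eta^*]_i\nabla[h_2(\textbf{x}^*)]_i = 0$. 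Combined with the two feasibility relations, this is exactly the KKT system for (\ref{vecpro}), completing the proof. The hardest part is the multiplier-boundedness step, which is precisely where the constraint qualification of Lemma \ref{indepen} enters; verifying that the algorithm's chosen subgradient $\theta^k$ (especially its $F$-block normal-cone component) stays bounded also deserves care.
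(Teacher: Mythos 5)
Your strategy is a genuinely different route from the paper's (the paper proves feasibility first, splitting on whether $\{\rho^k\}$ is bounded, and then gets multiplier convergence from the explicit formula $\pi^k=[H(\textbf{x}^k)H(\textbf{x}^k)^\intercal]^{-1}H(\textbf{x}^k)(\delta^k-\theta^k)$, whereas you front-load a multiplier-boundedness-by-contradiction argument), but the route has a genuine gap exactly at the point you flag as "deserving care", and it is fatal as stated. Your normalization step needs $\theta^k/s_k\to 0$, yet the $F$-block of the algorithm's subgradient $\theta^k$ is an element $\vartheta^k\in N_{\mathbb{S}_2}(F^k)$ of the normal cone of the box, and the $F$-block of the stationarity certificate (\ref{Critical}) reads $\vartheta^k-\bigl(\overline{\lambda}_3^k+\rho^k(Y^k-F^k)\bigr)=O(\epsilon^k)$. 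In other words, $\vartheta^k$ equals (up to $\epsilon^k$) precisely the block of $\Lambda_{\mathrm{eff}}^k$ whose boundedness your contradiction argument is supposed to deliver; asserting that it "equals the negative of bounded quantities" is circular, and nothing in Lemma \ref{sequbound} bounds $\rho^k(Y^k-F^k)$.

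Moreover, carrying the normal-cone term honestly through the normalization shows the contradiction never materializes. Writing $\mu=\lim\Lambda_{\mathrm{eff}}^k/s_k$ and $\nu=\lim\upsilon^k/s_k$, the blockwise limits give $\mu_1=\mu_2=0$ (from the $U$- and $V$-blocks, whose $f$-subgradients really are bounded by $\alpha,\beta$), $\mu_3=\mu_4$ (from the $Y$-block), $\mu_4$ equal to $-\widehat{Y}^*S$ for a symmetric $S$ built from $\nu$ (from the $\widehat{Y}$-block), and $\mu_3\in N_{\mathbb{S}_2}(F^*)$ (from the $F$-block, by outer semicontinuity of the normal cone) --- not a vanishing linear combination of $\{\nabla[h_1(\textbf{x}^*)]_i\}\cup\{\nabla[h_2(\textbf{x}^*)]_i\}$, so Lemma \ref{indepen} does not apply. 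Nonzero solutions of these relations actually exist at exactly the limit points one expects: feasibility forces $F^*=\widehat{Y}^*$ to be nonnegative with orthonormal columns, hence with at most one nonzero per row, and then any symmetric $S$ with zero diagonal and nonnegative off-diagonal entries yields $-\widehat{Y}^*S\in N_{\mathbb{S}_2}(F^*)$ with $(\mu_3,\nu)\neq 0$. This also shows a Mangasarian--Fromovitz-type condition for the full system (equalities plus the box on $F$) fails at such points, so the approach cannot be rescued by strengthening the constraint qualification. The paper sidesteps this same difficulty differently: it asserts $\{\theta^k\}$ is bounded because $f$ is convex with bounded subdifferentials on bounded sets (citing Bertsekas, Proposition B.24(b)) --- a statement valid for real-valued convex functions but not for the indicator block of $f$, so the paper buries the issue in that one claim; granting that claim, however, both the paper's argument and (with the circularity removed) yours would go through, whereas without it your normalization argument collapses at the step above.
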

 \begin{proof}
 We first show that $\textbf{x}^*$ satisfies the feasibility of problem (\ref{vecpro}), i.e., $h_1(\textbf{x}^*)=0$ and $h_2(\textbf{x}^*)=0$. By (\ref{constrain}), we conclude that $h_2(\textbf{x}^k)=0$ for each $k\in\mathbb{N}$. The continuity of $h_2$ yields  $h_2(\textbf{x}^*)=0$, i.e., $\textbf{x}^*\in\Gamma$. The proof of  feasibility $h_1(\textbf{x}^*)=0$ is divided into two parts, according to the boundedness of the sequence $\{\rho^k\}_{k\in\mathbb{N}}$.

\textbf{Part I.} Suppose first that the penalty sequence $\{\rho^k\}_{k\in\mathbb{N}}$ is bounded. By the penalty parameter update rule (\ref{rho}), it follows that $\rho^k$ stabilizes after some $k_0$, which implies that $\Vert h_1(\textbf{x}^{k+1})\Vert_\infty\leq\tau\Vert h_1(\textbf{x}^{k})\Vert_\infty$ for all $k\geq k_0$ and the constant $\tau\in[0,1)$. By a standard continuity argument, we obtain that $h_1(\textbf{x}^*)=0$.

\textbf{Part II.} In the following, we assume that $\{\rho^k\}_{k\in\mathbb{N}}$ is unbounded. For each $k\in\mathcal{K}$, there exist vectors $\{\delta^k\}_{k\in\mathbb{N}}$ with $\Vert\delta^k\Vert_\infty\leq\epsilon^k$ and $\epsilon^k\downarrow 0$ such that
\begin{equation}\label{delta}
    \theta^k+\sum_{i=1}^{m_1}([\overline{\Lambda}^k]_i+\rho^k[h_1(\textbf{x}^k)]_i)\nabla[h_1(\textbf{x}^k)]_i+\sum_{i=1}^{m_2}[\upsilon^k]_i\nabla[h_2(\textbf{x}^k)]_i=\delta^k
\end{equation}
for some $\theta^k\in\partial f(\textbf{x}^k)$. Dividing both sides of (\ref{delta}) by $\rho^k$, we obtain that
\begin{equation}\label{divrho}
    \sum_{i=1}^{m_1}([\frac{\overline{\Lambda}^k}{\rho^k}]_i+[h_1(\textbf{x}^k)]_i)\nabla[h_1(\textbf{x}^k)]_i+\sum_{i=1}^{m_2}[\hat{\upsilon}^k]_i\nabla[h_2(\textbf{x}^k)]_i=\frac{\delta^k-\theta^k}{\rho^k},
\end{equation}
where $\hat{\upsilon}^k=\frac{\upsilon^k}{\rho^k}$. Define
\begin{equation*}
 H(\textbf{x})^\intercal:=[\nabla h_1(\textbf{x})\ \nabla h_2(\textbf{x})]
\end{equation*}
and
\begin{equation*}
\begin{aligned}
   \eta^k:=([\frac{\overline{\Lambda}^k}{\rho^k}]_1+[h_1(\textbf{x}^k)]_1,\cdots,[\frac{\overline{\Lambda}^k}{\rho^k}]_{m_1}+[h_1(\textbf{x}^k)]_{m_1},[\hat{\upsilon}^k]_1,
   \cdots,[\hat{\upsilon}^k]_{m_2})^\intercal.
   \end{aligned}
\end{equation*}
 Hence we can rewrite (\ref{divrho}) in the following way:
 \begin{equation*}
   H(\textbf{x})^\intercal \eta^k=\frac{\delta^k-\theta^k}{\rho^k}.
 \end{equation*}
A straightforward application of Lemma \ref{indepen} yields that $\{\nabla[h_1(\textbf{x}^*)]_i\}_{i=1}^{m_1}\cup\{\nabla[h_2(\textbf{x}^*)]_i\}_{i=1}^{m_2}$
are independent as $\textbf{x}^*\in\Gamma$. In addition, we notice that the gradient vectors $\nabla h_1, \nabla h_2$ are continuous and $h_2(\textbf{x}^k)=0$ for all $k\in\mathcal{K}$. This means that $H(\textbf{x}^k)\rightarrow H(\textbf{x}^*)$ and $H(\textbf{x}^*)$ has full rank as $\textbf{x}^*\in\Gamma$. Therefore, we have that $H(\textbf{x}^k)H(\textbf{x}^k)^\intercal\rightarrow H(\textbf{x}^*) H(\textbf{x}^*)^\intercal \succ 0$.
By the fact that eigenvalues of a symmetric matrix vary continuously with its matrix values. We then conclude that $H(\textbf{x}^k)H(\textbf{x}^k)^\intercal$ is nonsingular for sufficiently large $k\in\mathcal{K}$, which yields
\begin{equation*}
    \eta^k=[H(\textbf{x}^k)H(\textbf{x}^k)^\intercal]^{-1}H(\textbf{x}^k)\frac{\delta^k-\theta^k}{\rho^k}.
\end{equation*}
Since $f$ is a convex function, the set $\cup_{x\in\mathcal{X}}\partial f(\textbf{x})$ is bounded whenever $\mathcal{X}$ is bounded. A nice proof of this result can be found in \citet[Proposition B.24(b)]{bertsekas1999nonlinear}. It is then straightforward to see that $\{\theta^k\}_{k\in\mathcal{K}}$ is bounded by setting $\mathcal{X}=\{\textbf{x}^k\}_{k\in\mathcal{K}}$, where the boundedness of $\{\textbf{x}^k\}_{k\in\mathcal{K}}$ is motivated by Proposition \ref{sequbound}. Combining the previous result $\Vert\delta^k\Vert_{\infty}\leq\epsilon^k\downarrow 0$, we obtain for $k\in\mathcal{K}$
\begin{equation*}
    \eta^k\rightarrow 0\quad as\;k\rightarrow\infty.
\end{equation*}
Finally, with the boundedness of Lagrange multipliers $\{\overline{\lambda}^k\}_k$, $[h_1(\textbf{x}^*)]_i=0=[\hat{\upsilon}]_j$ is guaranteed for all $i,j$. Hence we conclude that  $h_1(\textbf{x}^*)=0$.

Next, we show that $\textbf{x}^*$ is a $\text{KKT}$ point. The boundedness of $\{\theta^k\}_{k\in\mathcal{K}}$ implies that there exists a subsequence $\mathcal{K}_1\subseteq\mathcal{K}$ such that $\lim_{k\in\mathcal{K}_1}\theta^k=\theta^*$. Together with $\lim_{k\in\mathcal{K}_1}\textbf{x}^k=\textbf{x}^*$ and $\theta^k\in\partial f(\textbf{x}^k)$, it can be follows from the closedness property of subdifferential that
\begin{equation*}
    \theta^*\in\partial f(\textbf{x}^*).
\end{equation*}
By the fact that $[\lambda^{k+1}]_i=[\overline{\Lambda}^k]_i+\rho^k[h_1(\textbf{x}^k)]_i$ for all $i$, we have that for $k\in\mathcal{K}_1$
\begin{equation}\label{finasub}
    \theta^k+\sum_{i=1}^{m_1}[\lambda^{k+1}]_i\nabla[h_1(\textbf{x}^k)]_i+\sum_{i=1}^{m_2}[\upsilon^k]_i\nabla[h_2(\textbf{x}^k)]_i=\delta^k
\end{equation}
for some vector $\delta^k$ with $\Vert\delta^k\Vert_\infty\leq\epsilon^k\downarrow 0$ and $\theta^k\in\partial f(\textbf{x}^k)$. Define
\begin{equation}\label{pi}
    \pi^k:=([\lambda^{k+1}]_1,\cdots,[\lambda^{k+1}]_{m_1},[\upsilon^k]_1,\cdots,[\upsilon^k]_{m_2})^\intercal.
\end{equation}
We then deduce from (\ref{finasub})
\begin{equation*}
    H(\textbf{x}^k)^\intercal\pi^k=\delta^k-\theta^k.
\end{equation*}
Likewise, the matrix $H(\textbf{x}^k)H(\textbf{x}^k)^\intercal$ is nonsingular for sufficiently large $k\in\mathcal{K}_1$, and
\begin{equation*}
   \pi^k=[H(\textbf{x}^k)H(\textbf{x}^k)^\intercal]^{-1}H(\textbf{x}^k)(\delta^k-\theta^k).
\end{equation*}
Taking limitations within $\mathcal{K}_1$ on both sides of the expression above for $\pi^k$, we have then
\begin{equation*}
    \pi^k\rightarrow\pi^*=-[H(\textbf{x}^*)H(\textbf{x}^*)^\intercal]^{-1}H(\textbf{x}^*)\theta^*.
\end{equation*}
Taking limitations for $k\in\mathcal{K}_1$ again on both sides of (\ref{finasub}), it follows from (\ref{pi}) that
\begin{equation*}
    \theta^*+\sum_{i=1}^{m_1}[\Lambda^*]_i\nabla[h_1(\textbf{x}^*)]_i+\sum_{i=1}^{m_2}[\upsilon^*]_i\nabla[h_2(\textbf{x}^*)]_i=0,
\end{equation*}
where $\Lambda^*$ and $\upsilon^*$ are guaranteed by $\pi^*$. Therefore, $\textbf{x}^*$ is a $\text{KKT}$ point of problem (\ref{vecpro}).
 \end{proof}

By Theorem \ref{centraltheo} and (\ref{defx}), we can immediately obtain Theorem \ref{congver}. Our numerical experiments in the next section testify that Algorithm \ref{algo1} works well and can output $\text{KKT}$ point of the original problem (\ref{pro2}).

\section{Experiment Study}
In this section, we conduct numerical experiments to show effectiveness of Algorithm \ref{algo1} by using MATLAB (2020a) on a laptop of 16G of memory and Inter Core i7 2.3Ghz CPU against several state-of-the-art unsupervised feature selection methods on six real-world datasets, including one speech signal dataset (Isolet\footnote{https://jundongl.github.io/scikit-feature/datasets.html\label{web}}), two face image datasets (ORL\textsuperscript{\ref {web}},COIL20\textsuperscript{\ref {web}}), three microarray datasets (lung\textsuperscript{\ref {web}}, TOX-171\textsuperscript{\ref {web}}, 9\_Tumors\footnote{https://github.com/primekangkang/Genedata}). Table \ref{table1} summarizes the details of these 6 benchmark datasets used in the experiments. In addition to verifying the effectiveness of our method on the above datasets, we
also show the stability analysis, robustness analysis and parameter sensitivity analysis on some datasets.
\begin{table}[htp]
\caption{Dataset Description}
\vspace{2mm}
\begin{center}
\setlength{\tabcolsep}{1mm}{
\label{table1}
\begin{tabular}{|c|c|c|c|}
\hline
Dataset & Size & \# of Features&\# of Classes\\
\hline
\hline
lung&203&3312&5\\
TOX-171&171&5748&4\\
9\_Tumors&60&5726&9\\
Isolet&1560&617&26\\
ORL&400&1024&40\\
COIL20&1440&1024&20\\
\hline
\end{tabular}
}
\end{center}
\end{table}

{\bf Methods to Compare.} We compare the performance of Algorithm \ref{algo1} with the following state-of-the-art unsupervised feature selection methods:
\begin{itemize}
    \item \textbf{Baseline}: All of the original features are adopted.
    \item \textbf{MaxVar}~\citep{krzanowski1987selection}: Features corresponding to the maximum variance are selected to obtain the expressive features.
    \item \textbf{LS}~\citep{he2005laplacian}: Laplacian Score, in which features are selected with the most consistency with Gaussian Laplacian matrix.
    \item \textbf{SPEC}~\citep{zhao2007spectral}: According to spectrum of the graph to select features.
    \item \textbf{MCFS}~\citep{cai2010unsupervised}: Multi-cluster feature selection, it uses the $l_1$-norm
to regularize the feature selection process as a spectral information regression problem.
    \item \textbf{NDFS}~\citep{li2012unsupervised}:Non-negative discriminative feature selection, which addressed feature discriminability and correlation simultaneously.
    \item \textbf{UDFS}~\citep{yang2011l2}:Unsupervised discriminative feature selection
incorporated discriminative analysis as well as $l_{2,1}$-norm minimization, which is formalized as a unified framework.
\item\textbf{UDPFS}~\citep{wang2020unsupervised}: Unsupervised
discriminative projection for feature selection to select discriminative features by conducting fuzziness learning and sparse
learning simultaneously.
\end{itemize}

{\bf Evaluation Measures.}
Similar to previous work, and basing on the attained clustering results and the ground truth information, we evaluate the performance of the unsupervised feature selection methods by two widely utilized evaluation metrics, i.e., clustering ACCuracy (ACC) and Normalized Mutual Information (NMI)~\citep{yang2011l2}. The higher the ACC and NMI are, the better the clustering performance is.

Given one sample $\textbf{x}_i\in\{\textbf{x}_i\}_{i=1}^n$, denote $y_i$ be the ground truth label and $l_i$ be the predicted clustering label. The ACC is defined as
\begin{equation*}
    \text{ACC}=\frac{1}{n}\sum_{i=1}^n\delta(y_i,map(l_i)),
\end{equation*}
where $\delta(a,b)= 1$ if $a=b$; otherwise $\delta(a,b)=0$, and $map(l_i)$ is the permutation mapping function that maps each cluster label $l_i$ to the equivalent label from the data set. 

Given two random variables $P$ and $Q$, $P$ denotes the true labels and $Q$ represents clustering results. The NMI of $P$ and $Q$ is defined as:
\begin{equation*}
  \text{NMI}(P, Q)= \frac{I(P; Q)}{\sqrt{H(P)H(Q)}},
\end{equation*}
where $I(P; Q)$ is the mutual information between $P$ and $Q$, $H(P)$ and $H(Q)$ are the entropies of $P$ and $Q$, respectively.

\vspace{2mm}
{\bf Experiment Setting.}
In our experiments, the parameters of Algorithm \ref{algo1} are set as follows: $$\tau=0.99,\quad  r=1.01, \quad \rho^1=c/2,  \quad \overline{\lambda}_{1}^1=\overline{\lambda}_{3}^1=\overline{\lambda}_{4}^1=\textbf{O}_{n\times c}, \quad \overline{\lambda}_{2}^1= \textbf{O}_{d\times c}, $$
and $$\overline{\lambda}_{N,min}=-100\textbf{E}, ~~\overline{\lambda}_{N,max}=100\textbf{E}~~~(N=1, 2, 3, 4), \quad
  \epsilon^k=0.995^k~~(k\in\mathbb{N}).
$$
The parameters in Algorithm \ref{algo2} are set as $\underline{C}=C_i^{k,j}=\overline{C}=0.5$. The iteration is terminated if the iteration number exceeds 20.

In the compared methods, there are some hyper-parameters to be set in advance. We fix number of neighboring parameter $k=5$ for LS, SPEC, MCFS, UDFS, NDFS, and our proposed method. In order to make fair comparison of different unsupervised feature selection methods, we tuned the parameters for all methods by a grid-search strategy from $\{10^{-6},10^{-5},10^{-4}, \cdots, 10^4, 10^5, 10^6\}$, and the best clustering results from the optimal parameters are reported for all the algorithms. Because the optimal number of selected features is unknown, we set different number of selected features for all datasets, the selected feature number was tuned from $\{50, 100, 150, 200, 250, 300\}$. After completing the feature selection process, we use $K$-means algorithm to cluster the data into $c$ groups.
Since the initial center points have great impact on the performance of $K$-means algorithm, we conduct $K$-means algorithm $20$ times repeatedly with random initialization to report the mean and standard deviation values of ACC and NMI.

\vspace{2mm}In the next subsections, we will illustrate the algorithmic performance, stability, robustness and parameter sensitivity, respectively.

\subsection{Algorithmic Performance}
The experiments results of different methods on the datasets are summarized in Tables \ref{tab2} and \ref{tab3}. The best results are highlighted in bold fonts. 

In view of the averaging of all numerical results, it can be seen that the performance of our method is superior to other state-of-the-art methods. Its good performance is mainly attributed to the following aspects: Firstly, we adopt the technology similar to NDFS to establish the model, i.e., learning the pseudo class label indicators and the feature selection matrix simultaneously. However, the difference is that we use $l_{2,1}$-norm to characterize the linear loss function between features and pseudo labels and also take into account the prevention of overfitting. Secondly, different from the commonly used processing methods, we apply a convergent algorithm that can simultaneously optimize all variables in the feature selection model. In the previous section, we have proven the convergence property of our algorithm. Since the iterative sequence of our algorithm converges to $\text{KKT}$ points, it achieves better results than other methods.
\begin{table*}[htp]
	\centering
	\fontsize{9}{12}\selectfont
	\caption{Clustering results (ACC$\pm$STD$\%$) of different feature selection algorithms on six real-world datasets. The best results are highlighted in bold.}
	\label{tab2}
	\resizebox{\textwidth}{!}{
	\begin{tabular}{|c|c|c|c|c|c|c|c|c|c|}
	\hline
	\hline
	Dataset &All features& LS & Maxvar& MCFS & NDFS & SPEC & UDFS & UDPFS & Ours\cr\hline
	lung &65.0$\pm$3.6& 74.9$\pm$0.2& 68.0$\pm$9.4& 77.6$\pm$11.0 &63.3$\pm$6.9& 64.1$\pm$7.9& 72.3$\pm$10.9 & 69.6$\pm$7.7 & \textbf{82.4$\pm$7.9}\cr\hline
	ORL &49.7$\pm$3.2& 49.9$\pm$2.4& 50.8$\pm$1.4& \textbf{55.7$\pm$3.7} &50.5$\pm$3.0& 51.4$\pm$ 2.2& 53.3$\pm$4.1 &  53.1$\pm$3.8& 52.9$\pm$3.4\cr\hline
	Isolet& 60.9$\pm$2.1&58.7$\pm$1.5&56.9$\pm$2.3&64.5$\pm$4.3& 61.6$\pm$4.4&56.5$\pm$3.0&57.8$\pm$ 3.1& 58.3$\pm$2.9 &\textbf{65.8$\pm$3.9}\cr\hline
	COIL20& 62.7$\pm$3.1&62.2$\pm$1.9&61.4$\pm$1.6&63.0$\pm$3.7& 58.7$\pm$ 4.1&\textbf{65.5$\pm$3.8}&60.2$\pm$4.2& 58.2 $\pm$4.6&61.6$\pm$3.8\cr\hline
	TOX-171&42.8$\pm$2.1&43.1$\pm$1.4&42.9$\pm$1.6&42.9$\pm$1.6&43.4$\pm$3.3 &40.4$\pm$0.0&48.2$\pm$2.1& \textbf{54.0$\pm$ 3.2}&49.2$\pm$4.1  \cr\hline
	9\_Tumors&40.8$\pm$3.7&42.3$\pm$2.6&41.2$\pm$2.6&42.4$\pm$3.6&44.0$\pm$3.7 &35.8$\pm$2.4&43.0$\pm$ 4.3 & \textbf{44.2$\pm$4.3}&44.1$\pm$4.1 \cr\hline
	\textbf{Mean} &53.7$\pm$3.0&55.2$\pm$1.7&53.5$\pm$3.2&57.7$\pm$4.7&53.6$\pm$4.2 &52.3$\pm$3.2&55.8$\pm$4.8 &56.2$\pm$4.4 & \textbf{59.3$\pm$4.5} \cr\hline
	\end{tabular}%
    }
\end{table*}%
\begin{table*}[htp]
	\centering
	\fontsize{9}{12}\selectfont
	\caption{Clustering results (NMI$\pm$STD$\%$) of different feature selection algorithms on six real-world datasets. The best results are highlighted in bold.}
	\label{tab3}
	\resizebox{\textwidth}{!}{
	\begin{tabular}{|c|c|c|c|c|c|c|c|c|c|}
	\hline
	\hline
	Dataset &All features& LS & Maxvar& MCFS & NDFS & SPEC & UDFS & UDPFS & Ours\cr\hline
	lung &51.6$\pm$1.9& 53.1$\pm$ 0.5& 57.8$\pm$ 3.9& 67.5$\pm$7.0 &53.0$\pm$3.5 & 52.5$\pm$ 5.6& 61.3$\pm$5.8& 59.0$\pm$4.0&\textbf{69.0$\pm$4.4}\cr\hline
		ORL &70.0$\pm$1.7& 71.1$\pm$ 1.3& 70.7$\pm$ 2.1& \textbf{76.8$\pm$1.8} &73.2$\pm$1.9 & 71.4$\pm$ 1.3& 74.7$\pm$1.6& 74.8$\pm$1.6 & 74.9$\pm$1.7\cr\hline
	Isolet& 75.7$\pm$0.8&73.2$\pm$0.9&74.8$\pm$1.3&77.7$\pm$1.7& 77.1$\pm$ 2.2&72.4$\pm$1.1&74.7$\pm$1.8&74.4$\pm$1.3 &\textbf{80.5$\pm$1.3}\cr\hline
	COIL20& \textbf{77.1$\pm$1.3}&72.5$\pm$1.1&71.9$\pm$0.7&76.5$\pm$1.7& 74.0$\pm$ 1.6&75.3$\pm$1.6&75.4$\pm$1.3&73.9$\pm$2.0 &76.3$\pm$2.3\cr\hline
	TOX-171&13.6$\pm$2.3&12.5$\pm$1.7&11.4$\pm$3.2&12.7$\pm$0.4&16.4 $\pm$5.9 & 9.7 $\pm$ 0.0 &22.8$\pm$3.5&\textbf{29.9$\pm$1.2} &25.3$\pm$4.4  \cr\hline
    9\_Tumors&39.5$\pm$3.1&41.0$\pm$2.3&40.2$\pm$2.5&41.1$\pm$2.7&44.7$\pm$4.5  &34.5$\pm$2.4& 44.1$\pm$4.3 & \textbf{46.7$\pm$3.6}&44.8$\pm$3.2 \cr\hline
    \textbf{Mean}&54.6$\pm$1.9&53.9$\pm$1.3&54.5$\pm$2.3&58.7$\pm$2.6&56.4$\pm$3.3 &52.6$\pm$2& 58.8$\pm$3.1 & 59.8$\pm$2.3 &\textbf{61.8$\pm$2.9} \cr\hline
	\end{tabular}
	}
\end{table*}%

\subsection{Stability Analysis}
Now we will illustrate that our algorithm is more stable than other iterative algorithms including: UDPFS~\citep{wang2020unsupervised}, NDFS~\citep{li2012unsupervised} and UDFS~\citep{yang2011l2}. Following the symbol in ~\citet{li2012unsupervised,yang2011l2,wang2020unsupervised}, we denote the feature selection matrix as $W$ in these methods and define $$\eta=\frac{\Vert W_{k+1}-W_{k}\Vert_F}{\Vert W_{k}-W_{k-1}\Vert_F},$$ where $W_k$ is the $k$-th iterative point. To demonstrate fully that our algorithm is more stable,  we randomly initialize  cluster indicator matrix $Y$ and $W$ 20 times. Under the parameter setting of the optimal results obtained by corresponding method, we record the average results of $\eta$. The experimental results are shown in Fig. \ref{fig:3}.
\begin{figure}[t]
\centering
\subfigure[lung]{
 \includegraphics[width=3in]{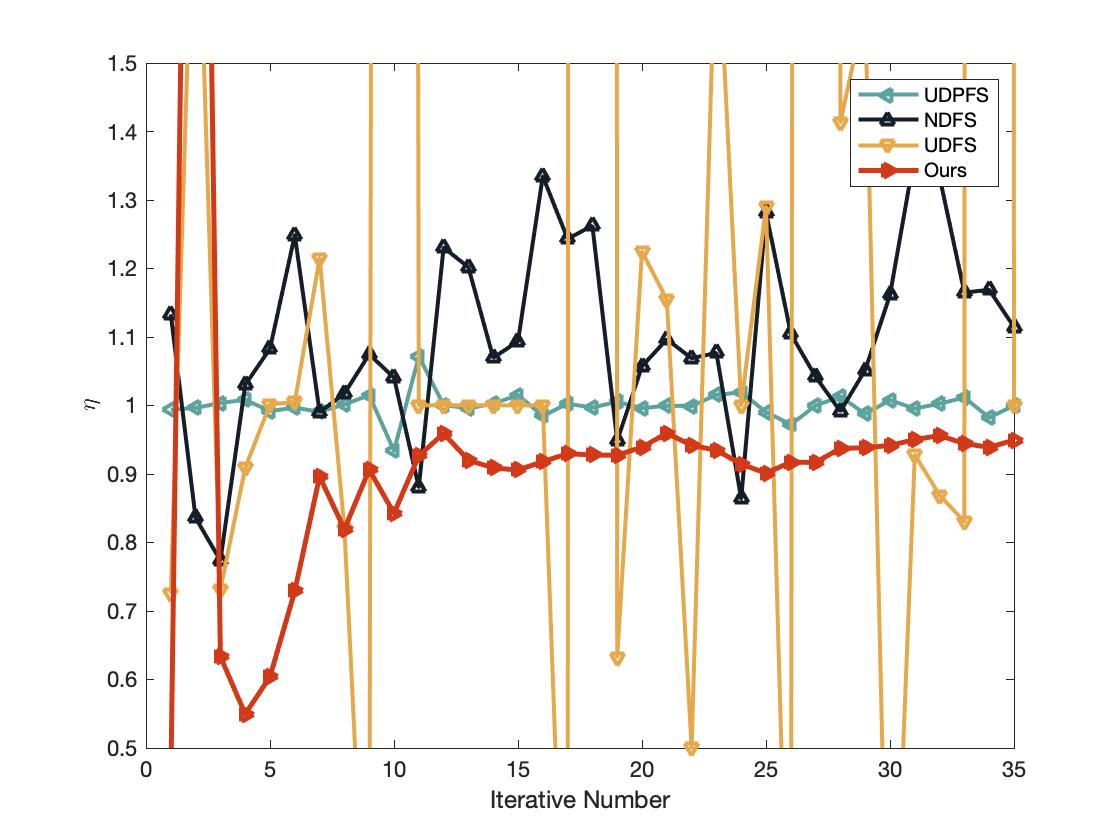}
}\enspace
\subfigure[Isolet]{
 \includegraphics[width=3in]{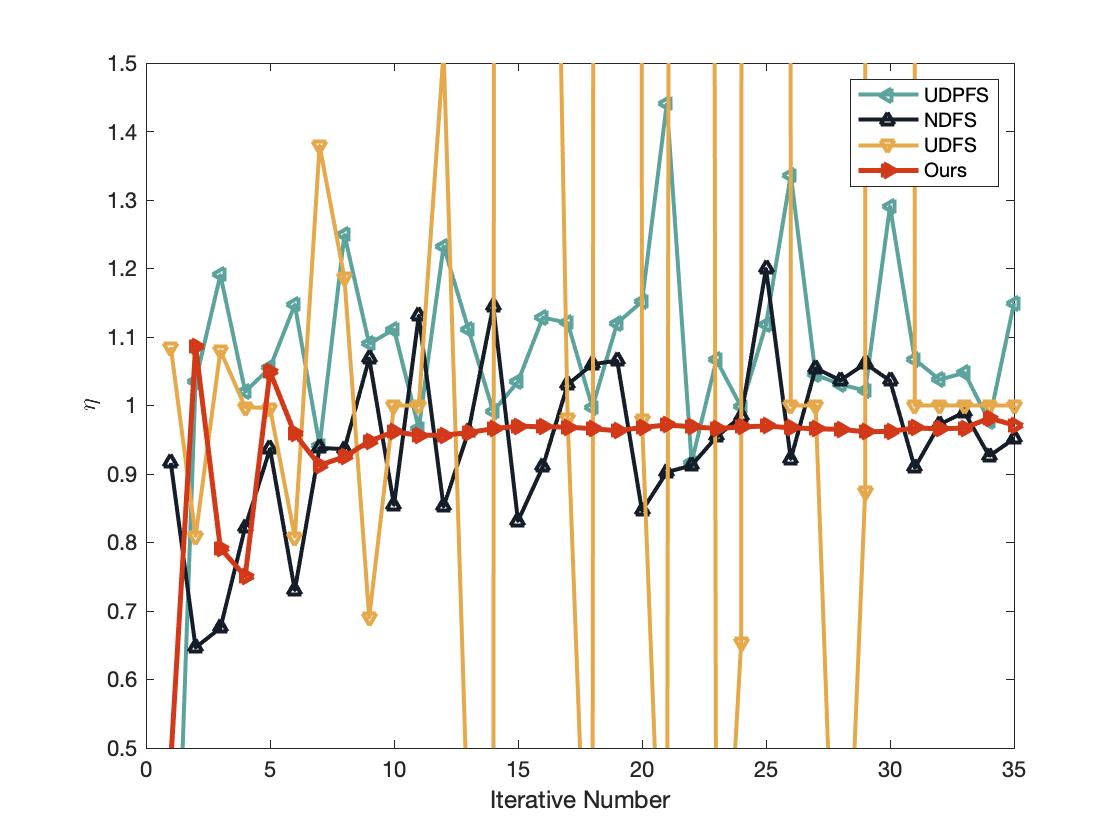}
}

\caption{Stability curves over lung and Isolet.\label{fig:3}}
\end{figure}

It can be seen that the value of $\eta$ of the other three methods are always changing irregularly , while ours starts to stabilize after fewer iterations and then always less than 1. Furthermore, we know that , with the increase of iterative number $k$, $\Vert W_{k+1}-W_{k}\Vert_F$ decreases gradually in our method, which shows that our iterative sequence $\{W_k\}_{k\in\mathbb{N}}$ keeps the "distance" of the adjacent two points gradually reduced and it is changed regularly according to the iterative rules.  Following the previous theoretical proof, iterative sequence $\{W_k\}_{k\in\mathbb{N}}$ will eventually converge to the $\text{KKT}$ points. Compared with our method, since the values of $\eta$ of UDPFS, NDFS and UDFS
are ruleless, iterative sequence $\{W_k\}_{k\in\mathbb{N}}$ is ``jumping" irregularly and does not have a convergence trend. Therefore, our method is more stable.

\subsection{Robustness Analysis}
\begin{figure}[t]
\centering
\subfigure[ ]{
 \includegraphics[width=2.9in]{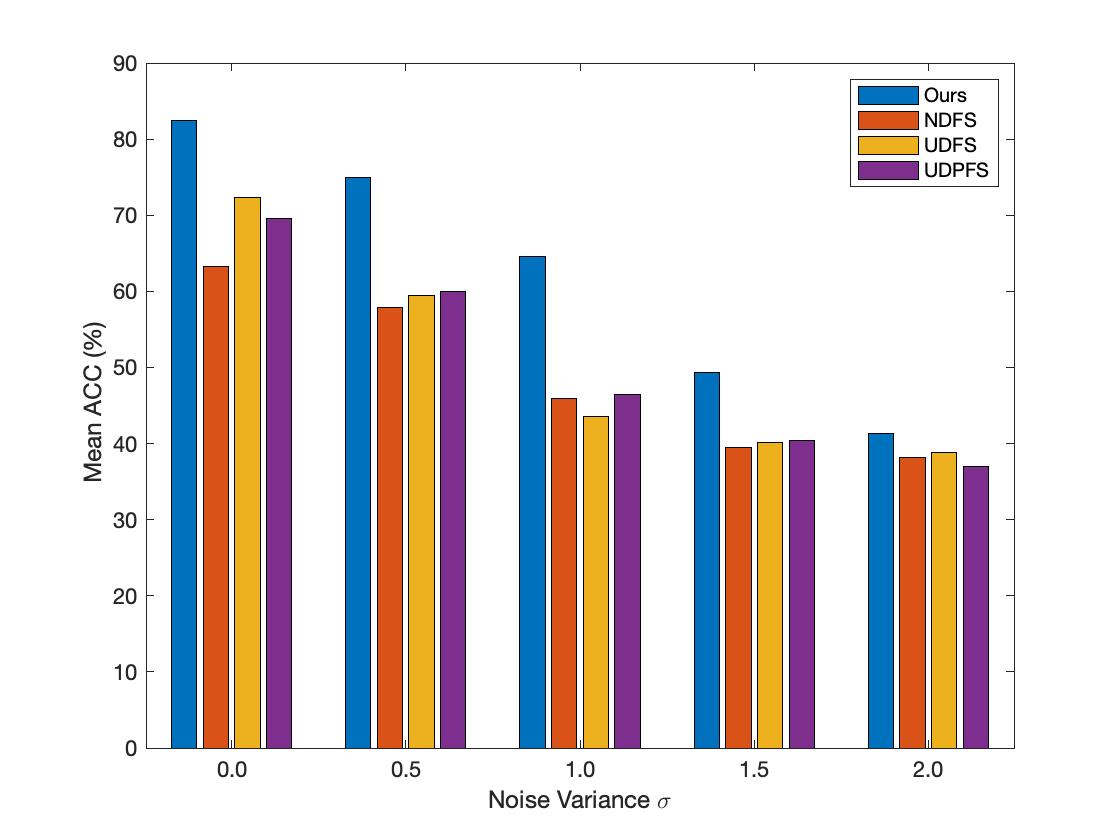}
}\enspace
\subfigure[ ]{
 \includegraphics[width=2.9in]{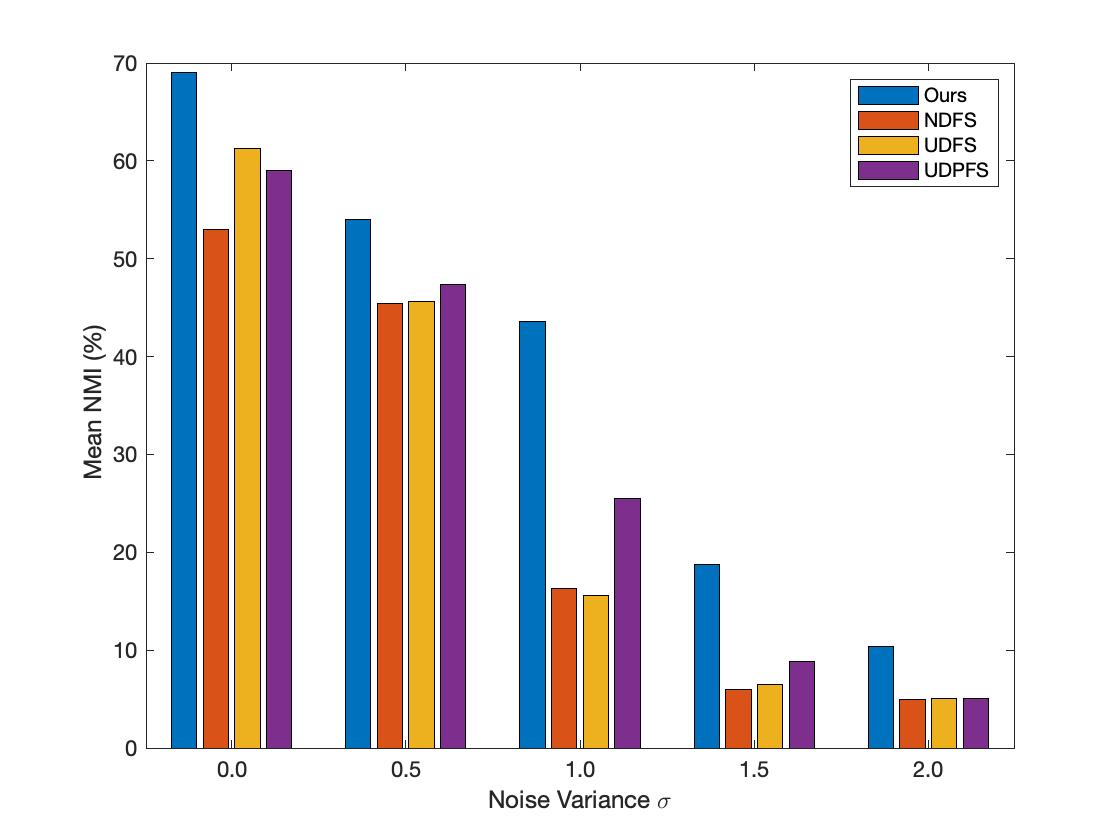}
}
 \vspace{-3mm}
\caption{Robustness comparison to data perturbation between our method and other iterative methods on lung.\label{fig:4}}
\end{figure}
\begin{figure}[t]
\centering
\subfigure[ ]{
 \includegraphics[width=2.9in]{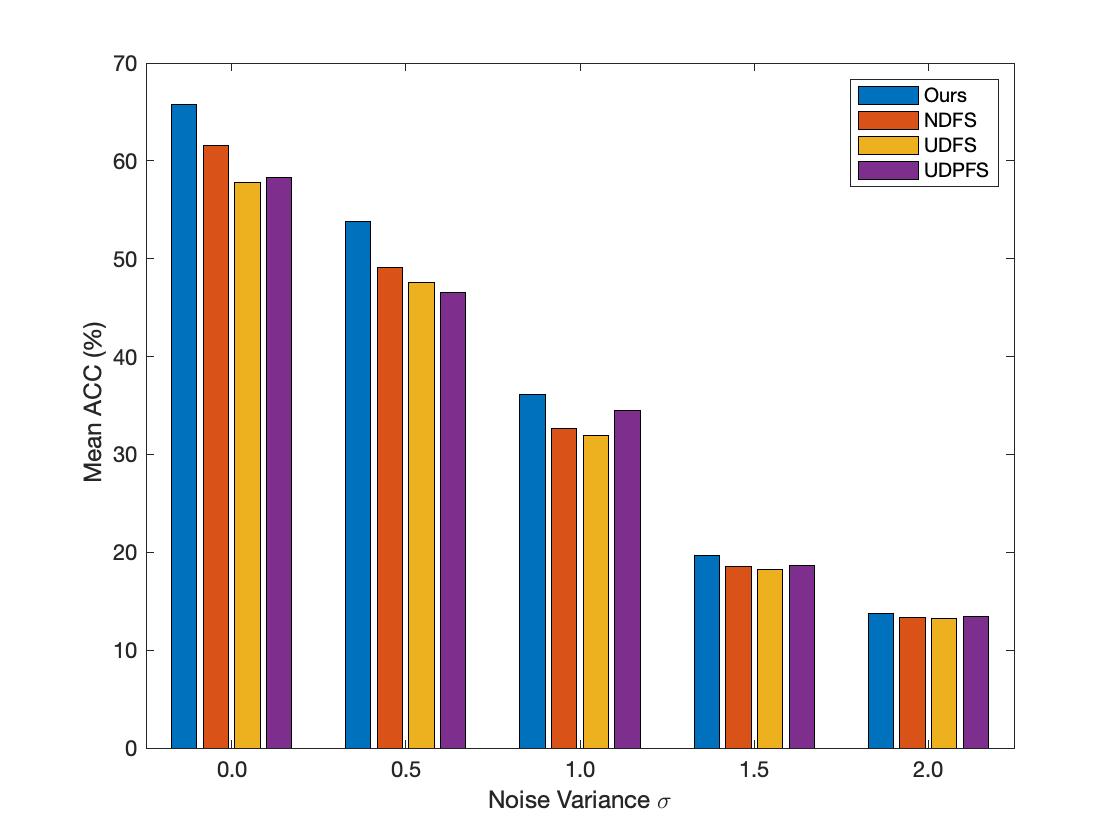}
}\enspace
\subfigure[ ]{
 \includegraphics[width=2.9in]{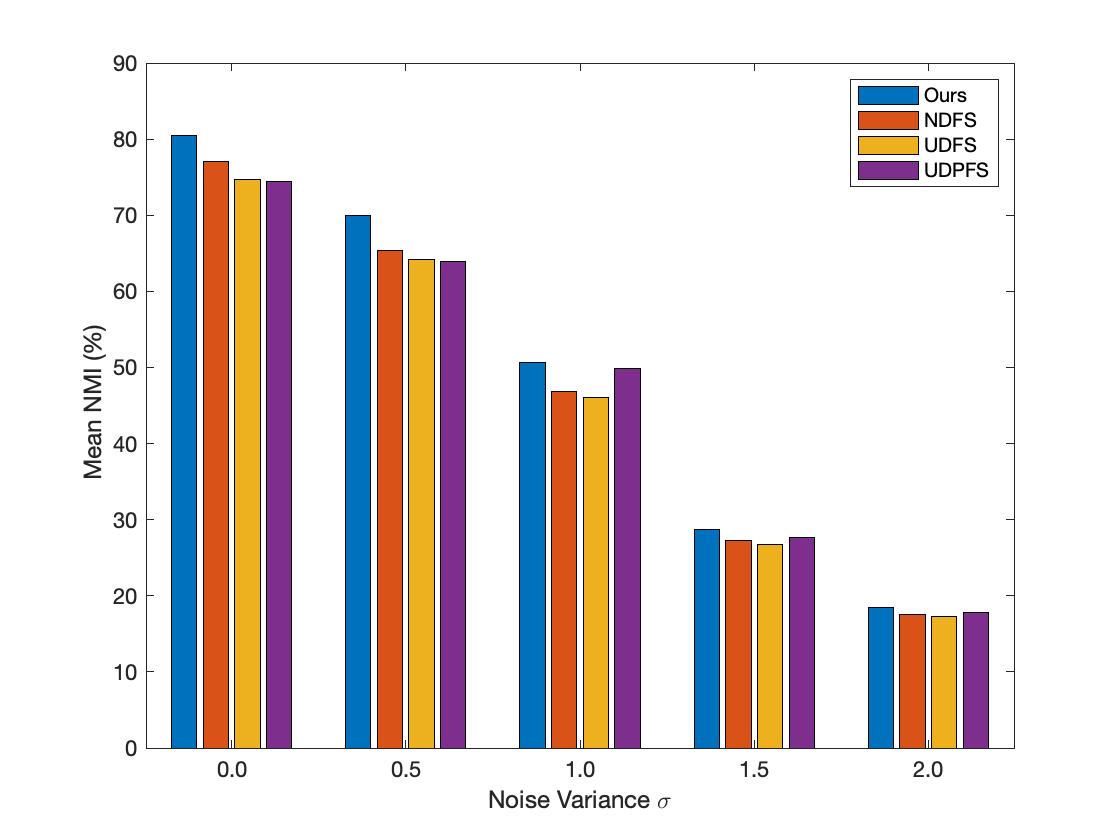}
}
 \vspace{-3mm}
\caption{Robustness comparison to data perturbation between our method and other iterative methods on Isolet.\label{fig:5}}
\end{figure}
\FloatBarrier
In this subsection, we summarize the main results for our robustness analysis. We consider the effect of varying the amount of perturbation introduced in the datasets, i.e., the effect of performance if we fine-tune the Gaussian noise from the distribution $\mathcal{N}(\textbf{0},\sigma^2)$ where $\sigma$ is sampled from the set $\{0.0, 0.5, 1.0, 1.5, 2.0\}$ and add the Gaussian noise to the input data. In order to make a fair comparison, we conduct the experiments under the parameter setting of the optimal results obtained by each method for the chosen dataset.
Meanwhile, in order to avoid the influence of the randomness of noise in a single experiment, we uniformly do ten experiments for each noise variance, and then average the results as the final result. 

Fig. \ref{fig:4} and Fig. \ref{fig:5} show the robustness of the iterative methods here considered on the lung dataset and Isolet dataset with different levels of noise. Note that with the increase of disturbance, the robustness of all iterative methods
falls off, while the performance of our method is always the best. Therefore, compared with other methods, our method has a strong robustness.

\subsection{Parameter Sensitivity Analysis}
\begin{figure}[t]
\centering
\subfigure[ACC over Isolet]{
 \includegraphics[width=2.9in]{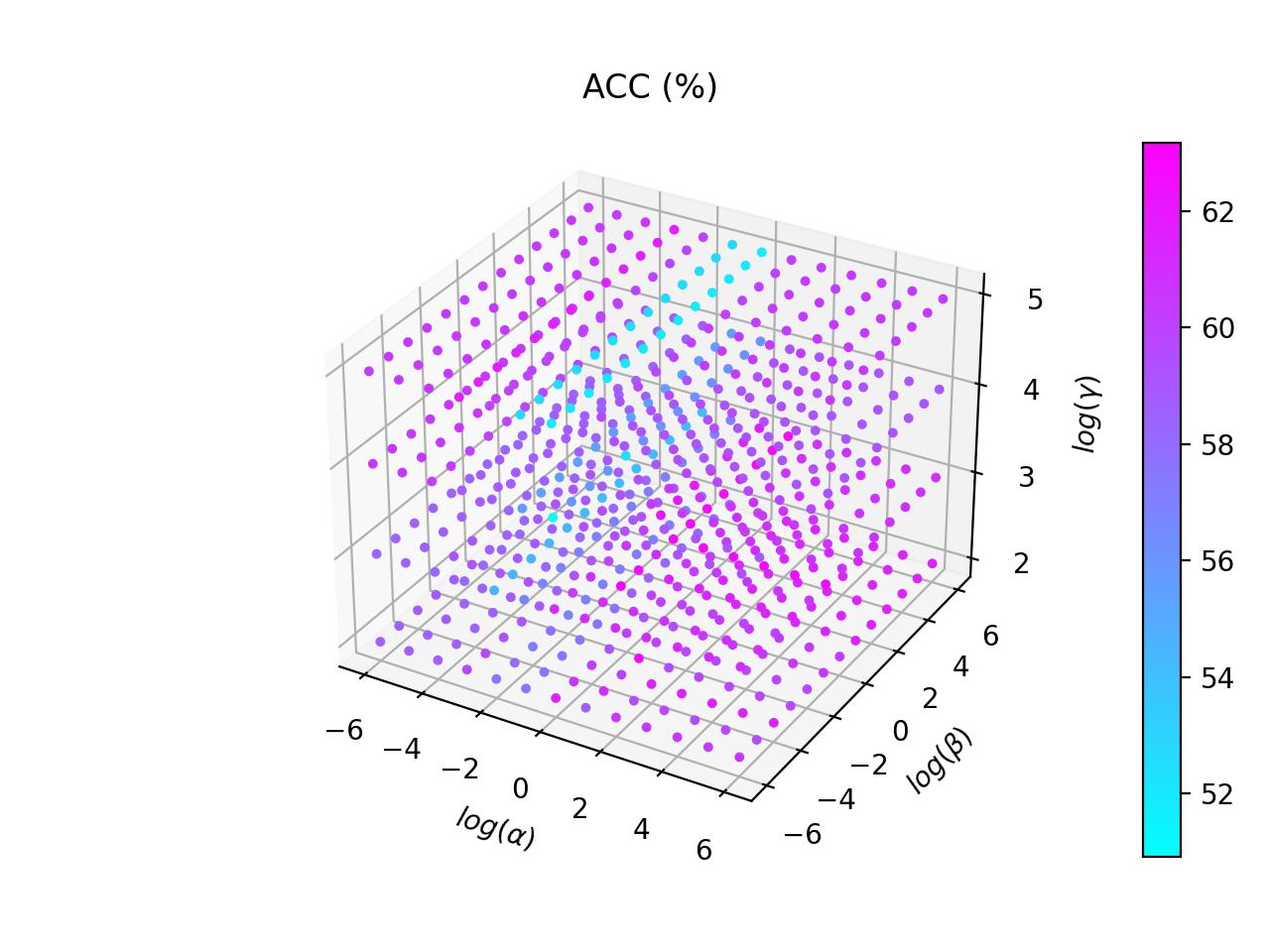}
}\enspace
\subfigure[NMI over Isolet ]{
 \includegraphics[width=2.9in]{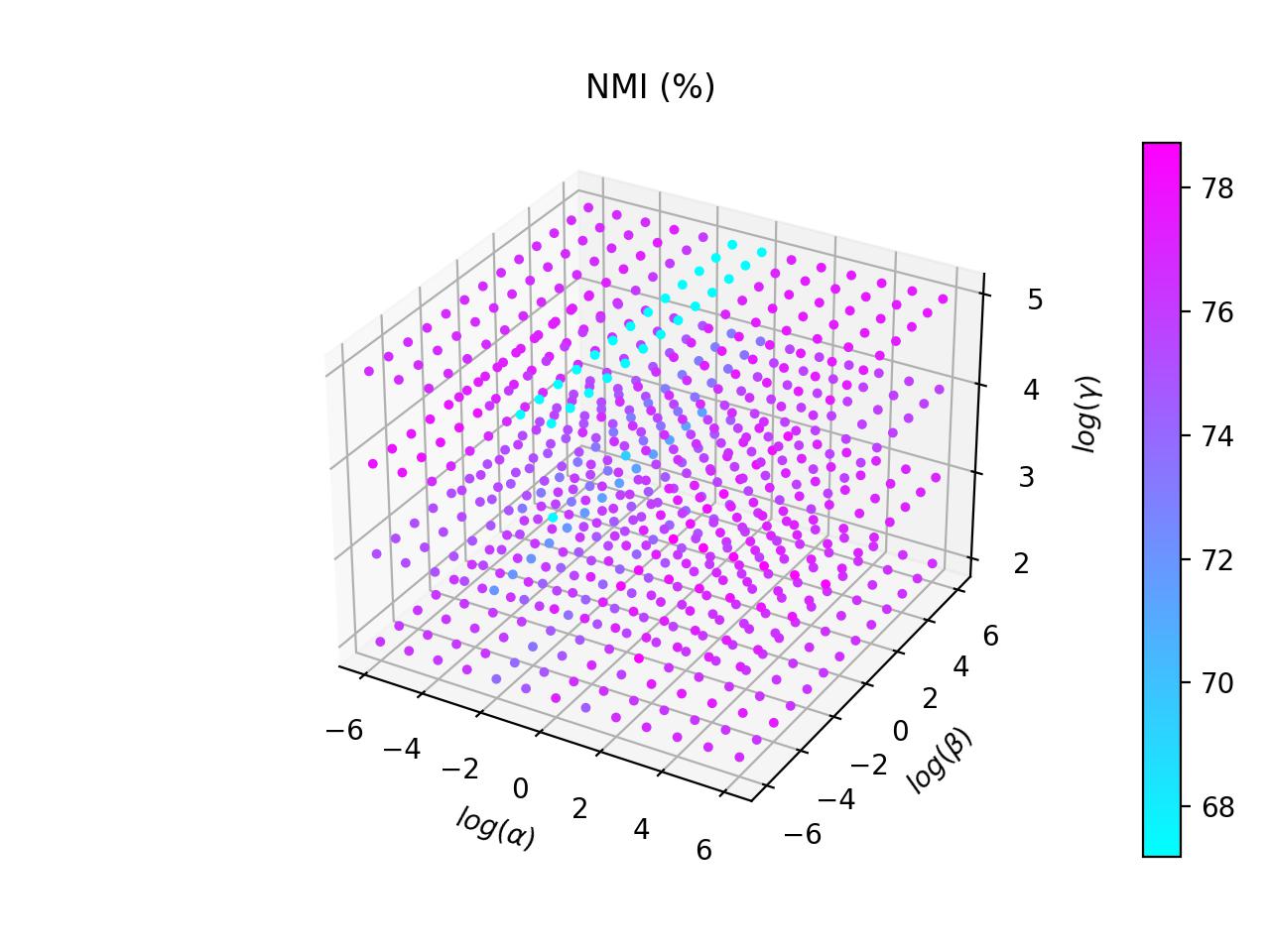}
}
 \centering
\subfigure[ACC over lung ]{
 \includegraphics[width=2.9in]{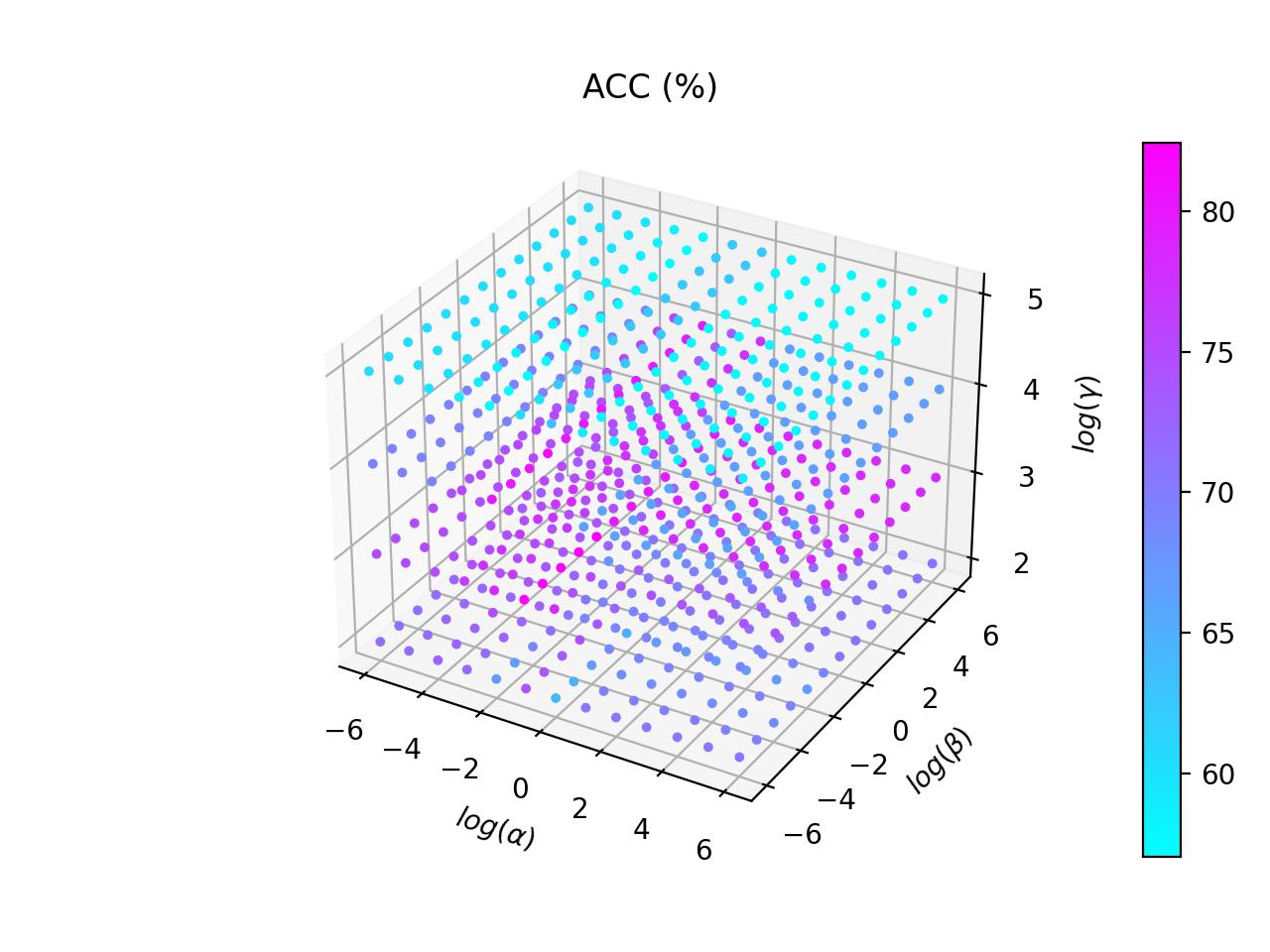}
}\enspace
\subfigure[NMI over lung ]{
 \includegraphics[width=2.9in]{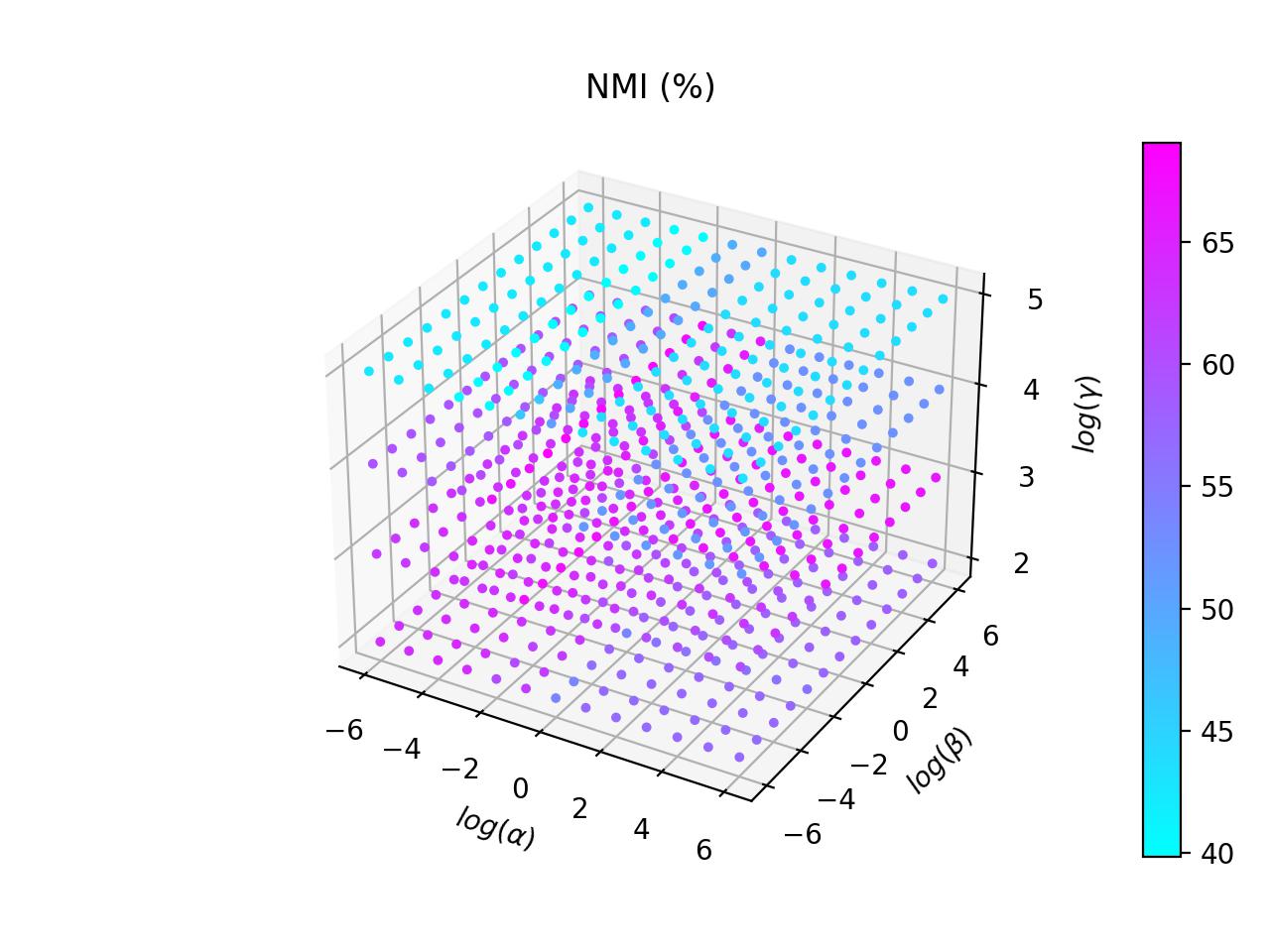}
}
\vspace{-2mm}
\caption{Performance with different $\alpha$, $\beta$, $\gamma$ values on Isolet and lung with a grid search strategy.\label{fig:1+2}}
\end{figure}
Like many other feature selection algorithms, our proposed method also requires several parameters $\alpha, \beta, \gamma$ to be set in advance. Next, we will discuss their sensitivity. In our experiments, we observe that the parameters $\alpha$ and $\beta$ have more effect on the performance than the parameter $\gamma$ on the given datasets. Therefore, we focus on discussing the parameters $\alpha$ and $\beta$. We will conduct the parameter sensitivity study in terms of $\alpha$, $\beta$ when $\gamma$ is fixed to some values. $\alpha$ and $\beta$ are tuned from $\{10^{-6}, 10^{-5},\cdots,10^5, 10^6 \}$. The results on lung and Isolet are presented in Fig. \ref{fig:1+2}. It can be seen that our method is not sensitive to $\alpha, \beta$ and $\gamma$ with relatively wide ranges.

\section{conclusion}
In this paper, we firstly have explored an ideal feature selection model: $l_{2,1}$-norm regularized regression optimization problem with non-negative orthogonal constraint, which well captures the most representative features from the original high-dimensional data. Then, we propose an inexact augmented Lagrangian multiplier method to solve our feature selection model. Moreover, a proximal alternating minimization method is utilized to solve the augmented Lagrangian subproblem with the benefit being that each subproblem has a closed form solution.  It is shown that our algorithm  has the subsequence convergence property, which is not provided in the state-of-the-art unsupervised feature selection methods. Quantitative and qualitative experimental results have shown the effectiveness of our proposed method.

\addcontentsline{toc}{section}{References}
\bibliography{main}

\end{document}